\newtheorem{theorem}{Theorem}[section]
\newtheorem{lemma}[theorem]{Lemma}
\newtheorem{conjecture}[theorem]{Conjecture}
\def\nfrac#1#2{{\textstyle\frac{#1}{#2}}}
\def\dfrac#1#2{\lower0.15ex\hbox{\large$\frac{#1}{#2}$}}
\let\originalleft\left
\let\originalright\right
\renewcommand{\left}{\mathopen{}\mathclose\bgroup\originalleft}
\renewcommand{\right}{\aftergroup\egroup\originalright}
\global\long\def\E{\mathbb{E}}
\global\long\def\Pr{\mathbb{P}}
\global\long\def\k{d}
\global\long\def\i{j}
\global\long\def\ii{k}
\global\long\def\iii{\ell}
\global\long\def\falling#1#2{\left(#1\right)_{#2}}
\global\long\def\darrow{\rightsquigarrow}
\global\long\def\one{\boldsymbol{1}}
\global\long\def\G{\mathcal{G}_{n,\k}}
\global\long\def\multiG{\Omega_{n,\k}}
\global\long\def\P{\mathcal{P}_{n,\k}}
\global\long\def\nP{\#P}
\global\long\def\r{\rho}
\global\long\def\Y{Y}
\global\long\def\T{\mathcal{T}_{n}}
\global\long\def\D{\mathcal{D}_{n}}
\global\long\def\d{\delta}
\begin{document}

\title{\bf On the number of spanning trees\\ in random regular graphs}

\author{
Catherine Greenhill\thanks{Research supported by the Australian Research Council.}\\
\small School of Mathematics and Statistics\\[-0.5ex]
\small The University of New South Wales\\[-0.5ex]
\small Sydney NSW 2052, Australia\\[-0.5ex]
\small \tt c.greenhill@unsw.edu.au\\
\and
Matthew Kwan\thanks{Research supported by a UNSW Faculty of Science Summer Vacation Research Scholarship.}\\
\small School of Mathematics and Statistics\\[-0.5ex]
\small The University of New South Wales\\[-0.5ex]
\small Sydney NSW 2052, Australia\\[-0.5ex]
\small \tt matthew.a.kwan@gmail.com \\
\and
David Wind\\
\small DTU Compute\\[-0.5ex]
\small Technical University of Denmark\\[-0.5ex]
\small DK-2800 Kongens Lyngby, Denmark \\[-0.5ex]
\small \tt utdiscant@gmail.com
}

\date{18 February 2014}

\maketitle

\begin{abstract}
\global\long\def\k{d}
Let $\k\geq 3$ be a fixed integer.   We give an asympotic formula
for the expected
number of spanning trees in a uniformly random $\k$-regular graph
with $n$ vertices.
(The asymptotics are as $n\to\infty$, restricted to even $n$ if $\k$ is odd.)  
We also obtain the asymptotic distribution of the number of spanning trees
in a uniformly random cubic graph, and conjecture that the corresponding
result holds for arbitrary (fixed) $\k$. 
Numerical evidence is presented which supports our conjecture.
%
\end{abstract}

\section{Introduction}\label{s:intro}

\global\long\def\wea{\zeta}

In this paper, $\k$ denotes a fixed integer which is at least 2 (and usually
at least 3). All asymptotics are taken as $n\to\infty$, with $n$ restricted to 
even integers when $\k$ is odd.

The number of spanning trees in a graph, also called the \emph{complexity}
of the graph,
is of interest for a number of reasons.  The complexity of a graph
is an evaluation of
the Tutte polynomial (see for example~\cite{welsh}).  
The Merino-Welsh conjecture~\cite{MW} relates
the complexity of a graph with two other graph
parameters, namely, the number of acyclic orientations and the number of
totally cyclic orientations of a graph. 
(Noble and Royle~\cite{NR} recently proved that the Merino-Welsh conjecture
is true for series-parallel graphs.) The complexity of a graph also plays
a role in the theory of electrical networks (see for example~\cite{myers}).

We are interested in the number of spanning trees in random regular graphs.
The first significant result in this area is due to McKay~\cite{McKay-0}, 
who proved that for $\k\ge3$, the
$n$th root of the number of spanning trees of a random $\k$-regular
graph with $n$ vertices converges to 
\begin{equation}
\frac{\left(\k-1\right)^{\k-1}}{\left(\k^{2}-2\k\right)^{\k/2-1}}\label{eq:mckay}
\end{equation}
as $n\to\infty$, with probability one. An alternative proof of this
was later given
by Lyons~\cite[Example 3.16]{Lyons}.

McKay~\cite[Theorem 4.2]{McKay} gave an asymptotic expression for the
expected
number of spanning trees in a random graph with specified degrees,
up to some unknown constant.  His result holds when the maximum degree
is bounded 
and the average degree is bounded away from 2 (independently of $n$).
When specialised to regular degree sequences,~\cite[Theorem 4.2]{McKay}
states that the
expected number of spanning trees in $\G$ is asymptotic to 
\begin{equation}
\frac{c_{\k}}{n}\left(\frac{\left(\k-1\right)^{\k-1}}{\left(\k^{2}-2\k\right)^{\k/2-1}}\right)^{n},\label{eq:cd}
\end{equation}
for some unknown constant $c_{\k}$.

Other work on asymptotics for the number of spanning trees has focussed on
circulant graphs, grid graphs and tori (see for example~\cite{GYZ} and the
references therein).

Our first result, Theorem~\ref{thm:expectation}, provides the value of the 
constant $c_{\k}$ from (\ref{eq:cd}), proving that
\[
c_{\k}=
  \exp\left(\frac{6\k^{2}-14\k+7}{4\left(\k-1\right)^{2}}\right)\,
\frac{\left(\k-1\right)^{1/2}}{\left(\k-2\right)^{3/2}}.
\]
For our second result we investigate
the distribution of the number of spanning trees in random $\k$-regular
graphs using the small subgraph conditioning method, and obtain
the asymptotic distribution in the case of cubic graphs, presented in
Theorem~\ref{thm:distribution-3}.  We provide partial calculations for 
arbitrary fixed degrees, which lead us to conjecture that the corresponding
result holds in general (see Conjecture~\ref{cnj:distribution}).

In order to precisely state our main results we must introduce some notation
and terminology.

\subsection{Notation and our main results}\label{ss:notation}

Let $\mathbb{N}$ denote the natural numbers (which includes 0).
For integers $n$, $k$ let $(n)_{k}$ denote the falling factorial
$n(n-1)\cdots (n-k+1)$. Square brackets without subscripts denote
extraction of coefficients of a generating function.
We use $\one\left(\cdot \right)$ to denote both the indicator variable of
an event and the characteristic function of a set (the particular set will
appear as a subscript). We use standard asymptotic notation throughout,
with the exception that $\darrow$ indicates convergence in
distribution of a sequence of random variables.
(We use this notation rather than $\stackrel{d}{\rightarrow}$
to avoid overloading the symbol ``$d$'',
which we use for the degree of the graph.)

Let $\G$ denote the uniform model of $\k$-regular simple graphs 
on the vertex set $\{ 1,\ldots, n\}$. 
Define the random variable $Y_{\mathcal{G}}$ to be
the number of spanning trees in a random $G\in\G$. 

Clearly $Y_{\mathcal{G}}$ is identically zero if $n\geq 3$ and
$\k < 2$.  
A 2-regular graph has a spanning tree if and only if
it is connected (that is, forms a Hamilton cycle), in which case it has exactly $n$ spanning
trees. Hence the distribution of $Y_{\mathcal{G}}$
can be inferred from~\cite[Equation (11)]{Wormald}.
For the remainder of the paper we assume that $\k\geq 3$.

Our first result gives an asymptotic expression for the expectation
of $Y_{\mathcal{G}}$.

\begin{theorem}
\label{thm:expectation}
Let $\k\geq 3$ be a fixed integer.  Then
\[
\E Y_{\mathcal{G}} \sim
    \exp\left(\frac{6\k^{2}-14\k+7}{4\left(\k-1\right)^{2}}\right)\,
\frac{\left(\k-1\right)^{1/2}}{n\left(\k-2\right)^{3/2}}\,
\left(\frac{\left(\k-1\right)^{\k-1}}{\left(\k^{2}-2\k\right)^{\k/2-1}}
   \right)^{n}\ .
\]
\end{theorem}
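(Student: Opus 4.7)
The plan is to compute $\E Y_\mathcal{G}$ by a direct enumeration in the pairing model $\multiG$ and then transfer to the simple-graph model. Since each simple $\k$-regular graph has exactly $(\k!)^n$ pre-images among pairings,
\begin{equation*}
\E Y_{\mathcal G} = \frac{\E_\Omega[Y_\Omega\,\one_{\mathrm{simple}}]}{\Pr_\Omega(\mathrm{simple})},
\end{equation*}
where $Y_\Omega$ counts sets of $n-1$ pairs in the random pairing whose union realizes a spanning tree of the underlying multigraph. The denominator tends to $\exp((1-\k^2)/4)$ by McKay--Wormald, so the work reduces to asymptotics of the numerator.

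I would first compute $\E Y_\Omega$ exactly. By linearity over labelled spanning trees $T$, grouped by the degree sequence $(t_v)$: there are $(n-2)!/\prod_v(t_v-1)!$ labelled trees with this sequence, the number of realizations of $T$ as $n-1$ pairs in the configuration is $\prod_v\falling{\k}{t_v}$, and each fixed realization appears in a uniform random pairing with probability $\bigl[\prod_{i=0}^{n-2}(\k n - 2i - 1)\bigr]^{-1}$. Substituting $s_v=t_v-1$ and using the identity
\begin{equation*}
\sum_{s\ge 0}\frac{\falling{\k}{s+1}}{s!}x^s = \k(1+x)^{\k-1}
\end{equation*}
collapses the sum over degree sequences to $\k^n\binom{n(\k-1)}{n-2}$, giving the closed form
\begin{equation*}
\E Y_\Omega = \frac{2^{n-1}\,\k^n\,\falling{\k n/2}{n-1}}{\falling{\k n}{n}}.
\end{equation*}
A routine Stirling expansion then yields $\E Y_\Omega \sim (\k-1)^{1/2}(\k-2)^{-3/2}n^{-1}\r^n$ with $\r = (\k-1)^{\k-1}/(\k^2-2\k)^{\k/2-1}$, so the polynomial prefactor and the non-exponential constant $(\k-1)^{1/2}/(\k-2)^{3/2}$ of the theorem are already visible in the pairing model.

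What remains is the simplicity correction: to show $\E[Y_\Omega\,\one_{\mathrm{simple}}]/\E Y_\Omega \to \exp(-\mu_\k)$, where $\mu_\k$ is the explicit constant satisfying $(\k^2-1)/4 - \mu_\k = (6\k^2-14\k+7)/(4(\k-1)^2)$. Conditional on $T\subseteq G$, the non-tree pairs form a uniformly random configuration on $(\k-2)n+2$ points with modified (non-regular) degrees $\k-t_v$, and ``$G$ is simple'' is the intersection of three asymptotically independent Poisson events: absence of loops in the residual pairing, absence of double edges among residual pairs, and absence of a residual pair duplicating an edge of $T$. Under the tree-weighting proportional to $\prod_v\falling{\k}{t_v}$, a saddle-point analysis of $\k(1+x)^{\k-1}$ at $x=1/(\k-2)$ shows that the $t_v-1$ behave asymptotically as i.i.d.\ $\mathrm{Bin}(\k-1,1/(\k-1))$ random variables conditioned on sum $n-2$. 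The three Poisson means can then be evaluated in closed form and summed to produce $\mu_\k$.

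The main obstacle is the explicit evaluation of $\mu_\k$. The loop count and the internal-double-edge count depend only on the empirical distribution of the $t_v$, but the tree-edge-duplication count involves the joint distribution of degrees at the endpoints of a uniformly random tree edge, which introduces correlations beyond the marginal i.i.d.\ picture and requires a Pr\"ufer-style accounting. One must also verify uniform Poisson approximation across the typical range of tree degree sequences and check that atypical sequences contribute negligibly. Once $\mu_\k$ is in hand, the identity $(\k^2-1)/4 - \mu_\k = (6\k^2-14\k+7)/(4(\k-1)^2)$ is a purely algebraic manipulation.
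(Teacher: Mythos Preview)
Your computation of $\E Y_\Omega$ is essentially identical to the paper's Section~\ref{sec:EY}, arriving at the same closed form $(n-2)!\,\k^n\binom{(\k-1)n}{n-2}\nP((\k-2)n+2)/|\P|$ and the same asymptotic. The difference lies in the simplicity correction. The paper does \emph{not} compute $\E[Y\one_{\mathrm{simple}}]$ directly; instead, Section~\ref{sec:E_YX} establishes the full Condition~\ref{cond:A2'} by evaluating $\E[Y\prod_j (X_j)_{\rho_j}]/\E Y$ for \emph{all} $m$ and $\rho$, via a careful analysis of how a spanning tree can intersect a collection of disjoint short cycles. Janson's Lemma~\ref{lem:janson} then converts this to~\ref{cond:A2}, and the special case $\rho=(0,0)$ yields $\E Y_{\mathcal G}/\E Y\to e^{-\lambda_1\zeta_1-\lambda_2\zeta_2}$ in one line. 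Your constant $\mu_\k$ is exactly $\lambda_1'+\lambda_2'$ in the paper's notation, so the two routes agree numerically. What each buys: your direct approach is more economical if one wants only the expectation, but it carries genuine technical debts you correctly flag (concentration of $\mu(T)$ over the weighted tree ensemble, and the tree-edge-duplication term, which depends on the tree beyond its degree sequence and requires a size-biased edge-endpoint analysis). The paper's route is heavier up front, but the factorial-moment calculation is not wasted: it is precisely what is needed to verify~\ref{cond:A2} for the distributional result (Theorem~\ref{thm:distribution-3}), so the expectation falls out essentially for free once that machinery is in place.
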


This theorem is proved at the end of Section~\ref{sec:E_YX}.

Next, for fixed $\k\geq 3$ and for each positive integer $\i$, define
\begin{equation}
\lambda_{\i}\left(\k\right)  =  \frac{\left(\k-1\right)^{\i}}{2\i},\qquad
\wea_{\i}\left(\k\right)  = 
  -\frac{2\left(\k-1\right)^{\i}-1}{\left(\k-1\right)^{2\i}}
\label{eq:janson-parameters}
\end{equation}

Our second theorem gives the asymptotic distribution of the number of
spanning trees in the case of cubic graphs.

\begin{theorem}
\label{thm:distribution-3}
Let $Z_{\i}\sim\mathrm{Poisson}\left(\lambda_{\i}\left(3\right)\right)$,
with each $Z_{\i}$ independent. Consider the number of spanning trees
in a random cubic graph, normalized by the expectation given in Theorem
\ref{thm:expectation} for $\k=3$. The asymptotic distribution of
this quantity is given by
\[
\frac{Y_\mathcal{G}}{\E Y_{\mathcal{G}}} \,\,
\darrow\,\,
\prod_{\i=3}^{\infty}\, \left(1+\wea_{\i}\left(3\right)\right)^{Z_{\i}}
\, e^{-\lambda_{\i}\left(3\right)\wea_{\i}\left(3\right)}.
\]
\end{theorem}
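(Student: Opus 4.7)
The plan is to apply the small subgraph conditioning method of Robinson and Wormald, in Janson's convenient reformulation. Combined with Theorem~\ref{thm:expectation}, this reduces Theorem~\ref{thm:distribution-3} to verifying, for the cubic case $\k=3$ and with $\lambda_j=\lambda_j(3)$, $\zeta_j=\zeta_j(3)$ as in~\eqref{eq:janson-parameters}, the following four statements:
(i) the short-cycle counts $X_{n,j}$ in $\G$ converge jointly to independent $\mathrm{Poisson}(\lambda_j)$ variables as $n\to\infty$; (ii) for each finitely-supported sequence $(m_j)_{j\geq 3}$ of non-negative integers,
\[
\mathbb{E}\Bigl[Y_\mathcal{G}\prod_{j}(X_{n,j})_{m_j}\Bigr] \big/ \mathbb{E} Y_\mathcal{G}
\;\longrightarrow\; \prod_{j}\bigl(\lambda_j(1+\zeta_j)\bigr)^{m_j};
\]
(iii) $\sum_{j\geq 3}\lambda_j\zeta_j^2<\infty$; and (iv) $\mathbb{E}[Y_\mathcal{G}^2]/(\mathbb{E} Y_\mathcal{G})^2\to\exp\bigl(\sum_{j\geq 3}\lambda_j\zeta_j^2\bigr)$. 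Statement (i) is the classical short-cycle Poisson theorem in the pairing model (transferred to $\G$ via the standard contiguity argument), and (iii) is routine since $\zeta_j(3)=O(2^{-j})$, so $\lambda_j\zeta_j^2=O(j^{-1}2^{-j})$.

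For (ii), I would work in the pairing model $\multiG$. Counting pairings that simultaneously realise a spanning tree and a prescribed vertex-disjoint collection of short cycles is a direct extension of the enumeration behind Theorem~\ref{thm:expectation}: short cycles contribute factors that, after the usual pairing-model computation, decouple cleanly from the spanning-tree enumeration. The ratio in (ii) then evaluates explicitly, and the quantity $\zeta_j(\k)$ in~\eqref{eq:janson-parameters} is engineered so that it comes out to $\prod_{j}(\lambda_j(1+\zeta_j))^{m_j}$. This step is not specific to $\k=3$ and goes through verbatim for every $\k\geq 3$; it is the partial calculation towards Conjecture~\ref{cnj:distribution} alluded to in the introduction.

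The main obstacle is (iv), the asymptotic second moment. Writing
\[
\mathbb{E}[Y_\mathcal{G}^2]=\sum_{T_1,T_2}\Pr(T_1\cup T_2\subseteq G),
\]
the sum being over ordered pairs of labeled spanning trees on $[n]$, the probability factors through the pairing model and depends only on the multigraph $H=T_1\cup T_2$. I would group pairs by the isomorphism type of $H$, and more finely by the local type at each vertex --- namely, how many of its incident edges are shared between $T_1$ and $T_2$ and how the unshared edges pair up. In the cubic case each vertex has degree exactly $3$, so the list of local types at a vertex is very short, keeping the enumeration feasible. After summing the contributions of each type and applying Laplace's method (or an equivalent saddle-point estimate) to the resulting exponential sum, the delicate point is to verify that the leading asymptotic of $\mathbb{E}[Y_\mathcal{G}^2]$ is precisely $(\mathbb{E} Y_\mathcal{G})^2\exp\bigl(\sum_{j\geq 3}\lambda_j(3)\zeta_j(3)^2\bigr)$. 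This cancellation between the bare ratio and the correction factor is where essentially all the work lies, and is exactly the step that blows up combinatorially for $\k\geq 4$ (many more local types per vertex), which is presumably why only the cubic case is proved.

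Once (i)--(iv) are in place, Janson's theorem delivers the stated limit law $\prod_{j\geq 3}(1+\zeta_j(3))^{Z_j}e^{-\lambda_j(3)\zeta_j(3)}$ for $Y_\mathcal{G}/\mathbb{E} Y_\mathcal{G}$, completing the proof of Theorem~\ref{thm:distribution-3}.
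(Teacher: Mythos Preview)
Your overall strategy---verify Janson's four conditions, with the variance condition (A4) being the crux for $d=3$---matches the paper, and your treatment of (i)--(iii) is essentially what the paper does in Sections~\ref{sec:EY} and~\ref{sec:E_YX}. One framing point: the paper works entirely in the pairing model $\P$ (so the sums and products run over $j\geq 1$, including loops and double edges), verifies (A1)--(A4) for $Y$ there, and only at the very end conditions on $X_1=X_2=0$ to pass to $\G$. Your phrasing mixes the two models; in particular your condition (iv) as stated, with $Y_\mathcal{G}$ and the sum from $j\geq 3$, is not what one actually checks.

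The second-moment calculation is where your outline diverges substantively. You propose to group pairs $(T_1,T_2)$ by the local type of each vertex in $T_1\cup T_2$, and you attribute the tractability of the cubic case to each vertex having only three incident edges and hence few local types. The paper takes a different route: it parametrizes instead by $b$, the number of connected components of the \emph{intersection} forest $T_1\cap T_2$, contracts each component to a super-bucket, and after careful counting reduces $|\P|\,\E Y^2$ to a sum over $b$ involving the coefficient $[z^n]\bigl(\sum_{j\geq 1}\binom{(d-1)j}{j}z^j\bigr)^b$ (Lemma~\ref{lem:|P|EY2}). This reduction is valid for every $d\geq 3$. What is special about $d=3$ is that the inner series has the closed form $(1-z)^{-1/2}-1$, which makes the two-variable saddle-point analysis (in $\beta=b/n$ and the Cauchy angle $\theta$) explicit; the saddle sits at $\beta=1/3$. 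So the genuine obstruction for $d\geq 4$ is not a proliferation of local vertex types but the absence of a closed form for $f$, and hence of a tractable saddle-point computation. Your vertex-type decomposition might in principle be pushed through, but it is not the mechanism the paper uses, and your diagnosis of why the argument stalls at $d\geq 4$ does not match the actual bottleneck.
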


This theorem is proved in Section~\ref{sec:EY2}.
We conjecture that an analogous result holds for arbitrary (fixed) $\k \geq 3$.

\begin{conjecture}
\label{cnj:distribution}
Let $\k\geq 3$ be fixed.  Then
\[
\frac{Y_\mathcal{G}}{\E Y_{\mathcal{G}}} \,\,
\darrow
\,\, \prod_{\i=3}^{\infty}\,
   \left(1+\wea_{\i}\left(\k\right)\right)^{Z_{\i}}\,
   e^{-\lambda_{\i}\left(\k\right)\wea_{\i}\left(\k\right)},
\]
where $Z_{\i}\sim\mathrm{Poisson}\left(\lambda_{\i}\left(\k\right)\right)$
and each $Z_{\i}$ is independent.
\end{conjecture}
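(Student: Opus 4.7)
The plan is to extend the small subgraph conditioning argument of Robinson--Wormald (as formalised by Janson) that was used to prove Theorem~\ref{thm:distribution-3} from the case $\k=3$ to all $\k\geq 3$. Janson's theorem reduces the conjecture to four ingredients: (i) the short-cycle counts $C_3,C_4,\ldots$ in $\G$ converge jointly to independent $\mathrm{Poisson}(\lambda_{\i}(\k))$ variables; (ii) for each $\i$ and $m$, $\E[\Y_{\mathcal{G}}\,(C_{\i})_m]/\E \Y_{\mathcal{G}}\to(\lambda_{\i}(\k)(1+\wea_{\i}(\k)))^m$, identifying the $\wea_{\i}(\k)$ of~(\ref{eq:janson-parameters}); (iii) $\sum_{\i}\lambda_{\i}(\k)\wea_{\i}(\k)^2<\infty$; and (iv) $\E \Y_{\mathcal{G}}^2/(\E \Y_{\mathcal{G}})^2\to\exp\bigl(\sum_{\i}\lambda_{\i}(\k)\wea_{\i}(\k)^2\bigr)$. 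Ingredient (i) is classical (Bollob\'as), and (iii) follows routinely from $|\wea_{\i}(\k)|=O((\k-1)^{-\i})$, which gives $\lambda_{\i}\wea_{\i}^2=O\bigl(\i^{-1}(\k-1)^{-\i}\bigr)$.

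For (ii), I would pass to the configuration model $\multiG$ and compute $\E[\Y\,(C_{\i})_m]$ by summing over choices of $m$ disjoint distinguished $\i$-cycles planted in a random $\k$-regular multigraph, together with a spanning tree of the resulting graph. Each planted cycle forces the spanning tree to omit exactly one of its edges, and after factoring out this combinatorial weight the remaining sum is a saddle-point / Cauchy-integral of the same shape as the one behind Theorem~\ref{thm:expectation}, with a small correction to the generating function reflecting the pinned edges. Comparing this modified integral to the unconditional one should produce the factor $(\lambda_{\i}(1+\wea_{\i}))^m$: the leading $-2(\k-1)^{\i}/(\k-1)^{2\i}$ term of $\wea_{\i}$ arises from the two orientations of each omitted edge, and the $+(\k-1)^{-2\i}$ correction comes from a second-order shift of the saddle. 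Joint convergence over finitely many $\i$ follows by the same argument with several independent families of planted cycles.

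The main obstacle is (iv), the second moment calculation. I would expand $\Y_{\mathcal{G}}^2=\sum_{T_1,T_2}\one(T_1\cup T_2\subseteq G)$ and group ordered pairs $(T_1,T_2)$ of labelled spanning trees by the combinatorial type of the overlap $T_1\cap T_2$ and the union $T_1\cup T_2$; for each type, the expected number of $G\in\multiG$ containing that union can be written as a saddle-point integral. One expects a single dominant type (trees sharing a small forest) to govern the asymptotics, and a multivariate Laplace analysis should deliver the target $\exp(\sum_{\i}\lambda_{\i}\wea_{\i}^2)$ after dividing by the square of the first moment from Theorem~\ref{thm:expectation}. The hard part is that for general $\k$ the space of overlap types grows in dimension, so locating the correct critical point, computing its Hessian, and bounding the boundary and off-critical contributions become substantially more technical than in the cubic case; this is presumably why the paper states the result only as a conjecture. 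Once (iv) is in hand, Janson's theorem yields the stated distributional limit immediately.
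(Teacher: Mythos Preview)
The statement is a \emph{conjecture} that the paper explicitly leaves open, so there is no paper proof to compare against. What the paper does prove is more than your plan acknowledges: Conditions~\ref{cond:A1}, \ref{cond:A2} and~\ref{cond:A3} are already established in full for every fixed $\k\ge3$ (Section~\ref{sec:E_YX} and Lemma~\ref{EY2/EY2-target}), and in the proof of Theorem~\ref{thm:distribution-3} the conjecture is reduced precisely to~\ref{cond:A4}, equivalently to the explicit asymptotic~(\ref{sufficient}). So ingredients (i)--(iii) of your plan are not open tasks.

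Your sketch of (ii) also contains a factual slip: a planted $\i$-cycle does not force the spanning tree to omit \emph{exactly one} of its edges; the intersection of the tree with the cycle can be any proper edge-subset (a disjoint union of paths), and the paper sums over all such patterns via the sets $\mathcal{I}_{\i}$ and a generating-function recursion, arriving at the closed form~(\ref{eq:lambda'}) for $\lambda_{\i}'$. Your heuristic for the two pieces of $\wea_{\i}$ (``two orientations of the omitted edge'' and ``second-order shift of the saddle'') does not correspond to this calculation.

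More importantly, you misdiagnose the obstacle in (iv). The paper already carries out a grouping of pairs $(T_1,T_2)$, but parametrises only by the number $b$ of components of $T_1\cap T_2$ and their sizes, obtaining the compact formula of Lemma~\ref{lem:|P|EY2}. For every $\k$ this yields a \emph{two}-dimensional saddle problem in $(\beta,\theta)=(b/n,\theta)$ via~(\ref{eq:contour-integral})--(\ref{eq:E}); the dimension does \emph{not} grow with $\k$. The genuine difficulty is that for $\k\ge4$ the series $f(z)=\sum_{\i\ge1}\binom{(\k-1)\i}{\i}\bigl((\k-2)^{\k-2}/(\k-1)^{\k-1}\bigr)^{\i}z^{\i}$ has no elementary closed form (for $\k=3$ it collapses to $(1-z)^{-1/2}-1$), so the saddle equations $\partial\phi/\partial\beta=\partial\phi/\partial\theta=0$ become implicit in $\k$, and one must locate the saddle, compute the Hessian, and verify global uniqueness and tail bounds without an explicit $r_\beta$. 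Your proposal does not engage with this analytic obstruction, and until it does it does not go beyond what the paper already reduces the problem to.
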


We present numerical evidence which supports this conjecture in
Section~\ref{ss:numerical}.

\subsection{Plan of attack}\label{ss:plan}

From now on, we omit explicit mention
of $\k$ in the constants $\wea_{\i}=\wea_{\i}\left(\k\right)$ and
$\lambda_{\i}=\lambda_{\i}\left(\k\right)$ from (\ref{eq:janson-parameters}).

As is standard in this area, most of our calculations
will be performed in the uniform probability space $\P$ of
\textit{pairings} (also called the 
\textit{configuration model}~\cite{Bollobas,McKay-pairings,Wormald}.
Let $\k$ and $n$ be positive integers such that $\k n$ is even.
Consider a set of $\k n$ \emph{prevertices} distributed evenly into 
$n$ sets, called
\emph{buckets}.  
(We prefer the terminology ``prevertices'' to ``points''.)  
A \emph{pairing} is a partition of the prevertices into
$\k n/2$ sets of size 2, called \emph{pairs}.  
Then
\begin{equation}
|\P| = \nP\left(\k n\right) = \frac{(\k n)!}{(\k n/2)!\, 2^{\k n/2}} 
  \sim\sqrt{2}\left(\frac{\k n}{e}\right)^{\k n/2},
\label{eq:num-pairings-asymptotic}
\end{equation}
using Stirling's formula.

By contracting the prevertices in each bucket to a vertex, each pairing
projects to a labelled $\k$-regular multigraph, with loops permitted.
Let $\multiG$ denote the set of such multigraphs, and denote the projection
of a pairing $P$ by $G\left(P\right)$.
(We will occasionally informally refer to ``partial'' pairings, where
only a subset of the prevertices are paired. The projection of a partial
pairing is defined in the same way.)

Each $G\in\G$ is the projection of $\left(\k!\right)^{n}$ different
pairings (permuting the prevertices in each bucket), so we can recover
the uniform model $\G$ from $\P$ by conditioning on the event that
the projected multigraph of a random pairing is simple.

We will apply the small subgraph conditioning method in the
form given by Janson~\cite[Theorem 1]{Janson}.

\begin{theorem}
\label{thm:janson}Let $\lambda_{\i}>0$ and $\wea_{\i}\ge-1$, $\i=1,2,\dots,$
be constants. Suppose that for each $n$ we have a sequence 
$\boldsymbol{X} =(X_1,X_2,X_3,\ldots)$ of
non-negative integer valued random variables and a random variable
$Y$ with $\E Y\ne0$ (at least for large $n$). Further suppose the
following conditions are satisfied:

\begin{enumerate}[label=\textup{(A\arabic*)}]

\item\label{cond:A1}
For $m\ge1$, $\left(X_{1},\dots,X_{m}\right)\darrow\left(Z_{1},\dots,Z_{m}\right)$,
where $Z_{\i}$ are independent Poisson random variables with means
$\lambda_{\i}$; 

\item\label{cond:A2}
For any $m\ge0$, $\r\in\mathbb{N}^{m}$,
\[
\frac{\E\left[Y|X_{1}=\r_{1},\dots,X_{m}=\r_{m}\right]}{\E Y} \longrightarrow \prod_{\i=1}^{m}\left(1+\wea_{\i}\right)^{\r_{\i}}e^{-\lambda_{\i}\wea_{\i}};
\]

\item\label{cond:A3}
${\displaystyle \sum_{\i=1}^{\infty}\lambda_{\i}\wea_{\i}^{2}<\infty}$;

\item\label{cond:A4}
$
{\displaystyle \frac{\E Y^{2}}{\left(\E Y\right)^{2}}\rightarrow\exp\left(\sum_{\i=1}^{\infty}\lambda_{\i}\wea_{\i}^{2}\right)}$.\end{enumerate}
Then
\begin{equation}
\frac{Y}{\E Y}\darrow W:=\prod_{\i=1}^{\infty}\left(1+\wea_{\i}\right)^{Z_{\i}}e^{-\lambda_{\i}\wea_{\i}}.\label{eq:W}
\end{equation}
Moreover, this and the convergence in \ref{cond:A1} hold jointly.
\end{theorem}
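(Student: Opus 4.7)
The plan is to prove \eqref{eq:W} by the method of moments combined with an $L^{2}$-truncation argument, controlling the ``tail'' contribution from $\i>m$ via the second-moment condition \ref{cond:A4}. Write $W_{m}:=\prod_{\i=1}^{m}(1+\wea_{\i})^{Z_{\i}}e^{-\lambda_{\i}\wea_{\i}}$ for the partial product, so that $W=\lim_{m\to\infty}W_{m}$; the overall strategy is to obtain the joint limit of $\bigl((X_{1},\dots,X_{m}),Y/\E Y\bigr)$ for each fixed $m$, then let $m\to\infty$.

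First, for each fixed $m$ I would establish the joint convergence
\[
\bigl((X_{1},\dots,X_{m}),\;Y/\E Y\bigr) \;\darrow\; \bigl((Z_{1},\dots,Z_{m}),\;U_{m}\bigr)
\]
for some nonnegative limit variable $U_{m}$ (on an enlarged probability space) with $\E U_{m}=1$ and $\E[U_{m}\mid Z_{1},\dots,Z_{m}]=W_{m}$. The inputs are \ref{cond:A1} and \ref{cond:A2}: expanding, for a bounded continuous test function $g:\mathbb{N}^{m}\to\mathbb{R}$,
\[
\E\bigl[g(X_{1},\dots,X_{m})\,Y/\E Y\bigr] \,=\, \sum_{\r\in\mathbb{N}^{m}} g(\r)\,\Pr(X_{1}=\r_{1},\dots,X_{m}=\r_{m})\,\frac{\E[Y\mid X_{1}=\r_{1},\dots,X_{m}=\r_{m}]}{\E Y},
\]
and passing to the termwise limit (with tightness and uniform integrability furnished by the $L^{2}$-boundedness coming from \ref{cond:A4}) yields the stated joint limit. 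Simultaneously, $(W_{m})$ is a nonnegative martingale with respect to $\sigma(Z_{1},\dots,Z_{m})$: writing $W_{m+1}/W_{m}=(1+\wea_{m+1})^{Z_{m+1}}e^{-\lambda_{m+1}\wea_{m+1}}$ and taking expectation through the Poisson moment generating function of $Z_{m+1}$ gives $\E[W_{m+1}\mid W_{m}]=W_{m}$. The same generating-function calculation gives $\E W_{m}^{2}=\exp\bigl(\sum_{\i\le m}\lambda_{\i}\wea_{\i}^{2}\bigr)$, which is bounded by $\exp\bigl(\sum_{\i\ge 1}\lambda_{\i}\wea_{\i}^{2}\bigr)<\infty$ by \ref{cond:A3}. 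Hence by the $L^{2}$ martingale convergence theorem, $W_{m}\to W$ a.s.\ and in $L^{2}$, with $\E W^{2}=\exp\bigl(\sum_{\i\ge 1}\lambda_{\i}\wea_{\i}^{2}\bigr)$.

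The main obstacle — and the essence of Janson's method — is to show that $U_{m}=W_{m}$ almost surely, i.e.\ that no additional randomness enters from the tail coordinates. This is exactly where \ref{cond:A4} is used. On one hand, \ref{cond:A4} together with uniform integrability transfers $\E(Y/\E Y)^{2}\to\E W^{2}$ to the limit: $\E U_{m}^{2}=\E W^{2}$ for every $m$. On the other hand, the relation $\E[U_{m}\mid Z_{1},\dots,Z_{m}]=W_{m}$ together with the Pythagorean identity for conditional expectation gives $\E U_{m}^{2}=\E W_{m}^{2}+\E(U_{m}-W_{m})^{2}$. Combining, $\E(U_{m}-W_{m})^{2}=\E W^{2}-\E W_{m}^{2}\to 0$ as $m\to\infty$, so $U_{m}\to W$ in $L^{2}$. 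Coupling this with the finite-$m$ joint convergence then yields $\bigl((X_{\i})_{\i\ge 1},Y/\E Y\bigr)\darrow\bigl((Z_{\i})_{\i\ge 1},W\bigr)$, proving both the convergence in \eqref{eq:W} and the ``moreover'' clause. The most delicate technical point will be justifying the interchange of the limits $n\to\infty$ (for fixed $m$) and $m\to\infty$, which relies on the uniform $L^{2}$-bound coming from \ref{cond:A4}.
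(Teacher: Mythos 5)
This statement is not proved in the paper at all: it is Janson's small subgraph conditioning theorem, quoted verbatim and cited as \cite[Theorem 1]{Janson}, and the authors use it as a black box. So there is no in-paper proof to compare against; your sketch is in effect a reconstruction of Janson's own argument (see also Theorem 9.12 of \cite{JLR}), and it follows that argument in all essentials: the $L^{2}$-bounded martingale $W_{m}\to W$ via \ref{cond:A3}, the finite-$m$ joint limit of $\bigl((X_{1},\dots,X_{m}),Y/\E Y\bigr)$ via \ref{cond:A1} and \ref{cond:A2}, and the Pythagorean/projection identity together with \ref{cond:A4} to kill the tail. The outline is sound.

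Two small points of precision. First, your claim that \ref{cond:A4} ``transfers'' to give the exact equality $\E U_{m}^{2}=\E W^{2}$ is not justified: convergence in distribution plus convergence of second moments does not give convergence of second moments of the limit unless the squares are uniformly integrable, which \ref{cond:A4} alone does not supply. Fortunately you only need the Fatou inequality $\E U_{m}^{2}\le\liminf_{n}\E(Y/\E Y)^{2}=\E W^{2}$, which combined with $\E U_{m}^{2}=\E W_{m}^{2}+\E(U_{m}-W_{m})^{2}$ still forces $\E(U_{m}-W_{m})^{2}\le\E W^{2}-\E W_{m}^{2}\to0$; you should state it that way. Second, the termwise passage to the limit in $\sum_{\r}g(\r)\Pr(X=\r)\,\E[Y\mid X=\r]/\E Y$ is cleanest via a Scheff\'e-type argument (nonnegative terms converging termwise with total mass $1$ on both sides, since $\E W_{m}=1$), rather than an appeal to uniform integrability from \ref{cond:A4}; and the existence of the limit pair $\bigl((Z_{1},\dots,Z_{m}),U_{m}\bigr)$ needs a tightness-plus-subsequence step, with \ref{cond:A4} supplying tightness of $Y/\E Y$. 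With those repairs the proof is the standard one.
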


We also need a related lemma:

\begin{lemma}
\label{lem:janson}\textup{(\cite[Lemma 1]{Janson})} Let $\lambda_{\i}'>0$,
$\i=1,2,\dots,$ be constants. Suppose that \ref{cond:A1} holds and
that $Y\ge0$.\textup{ S}uppose:

\begin{enumerate}[label=\textup{(A2$'$)}]

\item\label{cond:A2'}
For any $m\ge0$, $\r\in\mathbb{N}^{m}$,
\[
\frac{\E\left[Y\prod_{\i=1}^{m}\falling{X_{\i,n}}{\r_{\i}}\right]}{\E Y}\longrightarrow\prod_{\i=1}^{m}\left(\lambda_{\i}'\right)^{\r_{\i}}.
\]
\end{enumerate}
Then \ref{cond:A2} holds with $\lambda_{\i}\left(1+\wea_{\i}\right)=\lambda_{\i}'$
for all positive integers $\i$.
\end{lemma}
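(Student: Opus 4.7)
The plan is to reinterpret (A2$'$) in terms of a $Y$-tilted probability measure and then invoke the method of factorial moments for Poisson random variables. Since $\E Y\ne 0$ and $Y\ge 0$, define a new probability measure $\widetilde{\Pr}_n$ on the underlying sample space by $\widetilde{\Pr}_n(A)=\E[Y\,\one_A]/\E Y$, with corresponding expectation $\widetilde{\E}_n$. Under this tilted measure, the joint factorial moments of $(X_1,\dots,X_m)$ are
\[
\widetilde{\E}_n\!\left[\prod_{\i=1}^m (X_{\i,n})_{\r_\i}\right] = \frac{\E\bigl[Y\prod_{\i=1}^m (X_{\i,n})_{\r_\i}\bigr]}{\E Y},
\]
so hypothesis (A2$'$) says exactly that they converge to $\prod_\i (\lambda_\i')^{\r_\i}$, the joint factorial moments of a vector of independent $\mathrm{Poisson}(\lambda_\i')$ random variables.

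Since each Poisson distribution is uniquely determined by its moments (its probability generating function is entire), the method of factorial moments applies jointly, so under $\widetilde{\Pr}_n$ one has $(X_1,\dots,X_m)\darrow(\widetilde Z_1,\dots,\widetilde Z_m)$, where the $\widetilde Z_\i\sim\mathrm{Poisson}(\lambda_\i')$ are independent. Evaluating at any fixed $\r\in\mathbb{N}^m$, this yields
\[
\frac{\E\bigl[Y\,\one\{X_1=\r_1,\dots,X_m=\r_m\}\bigr]}{\E Y}
= \widetilde{\Pr}_n(X_1=\r_1,\dots,X_m=\r_m)
\longrightarrow \prod_{\i=1}^m \frac{(\lambda_\i')^{\r_\i} e^{-\lambda_\i'}}{\r_\i!}.
\]

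On the other hand, hypothesis (A1) gives $\Pr(X_1=\r_1,\dots,X_m=\r_m)\to\prod_\i \lambda_\i^{\r_\i}e^{-\lambda_\i}/\r_\i!$, which is strictly positive. Dividing the two limits, the ratio $\E[Y\mid X_1=\r_1,\dots,X_m=\r_m]/\E Y$ tends to
\[
\prod_{\i=1}^m\left(\frac{\lambda_\i'}{\lambda_\i}\right)^{\r_\i} e^{-(\lambda_\i'-\lambda_\i)}
= \prod_{\i=1}^m (1+\wea_\i)^{\r_\i}\, e^{-\lambda_\i\wea_\i},
\]
after substituting $\lambda_\i(1+\wea_\i)=\lambda_\i'$; this is precisely (A2). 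The main obstacle is the appeal to the method of factorial moments in the multivariate setting — namely, that convergence of all joint factorial moments to those of independent Poissons forces joint convergence in distribution of unbounded integer-valued random variables. This is standard because the limiting joint PGF $\prod_\i e^{\lambda_\i'(z_\i-1)}$ is entire on $\mathbb{C}^m$, but must be invoked carefully, e.g.\ by first deducing tightness from the convergence of first factorial moments and then identifying the limit via its factorial-moment generating function.
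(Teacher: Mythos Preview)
The paper does not prove this lemma: it is simply quoted from Janson~\cite[Lemma~1]{Janson} without proof. Your argument is correct and is essentially the standard one (and is the argument Janson gives): pass to the $Y$-size-biased measure, use (A2$'$) as convergence of joint factorial moments to those of independent Poissons, apply the method of moments to get pointwise convergence of the pmf under the tilted measure, and divide by the (A1) limit to recover (A2).
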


We now define the random variables $X_{\i}$ and $Y$ to which these
results will be applied.

For each $\i\geq 1$, let $\gamma_{\i}:\multiG\to\mathbb{N}$ give the number
of cycles of length $\i$ in a multigraph. (A loop is a 1-cycle, and a pair
of edges on the same two vertices is a 2-cycle.)
Then the random variable $X_{\i}=\gamma_{\i}\circ G$
is the number of $\i$-cycles in the projection of a random pairing $P\in\P$. Write
$\boldsymbol{X} = \left(X_{\i}\right)_{\i\geq 1}$ for the
sequence of all cycle counts.
It is well known~\cite{Bollobas} that for any positive integer $m$,
the random variables $X_{1},\ldots,X_{m}$ 
are asymptotically independent Poisson random variables, and that
the mean of $X_{\i}$ tends to the quantity $\lambda_{\i} = \lambda_{\i}(\k)$ 
given in (\ref{eq:janson-parameters}).
Hence Condition~\ref{cond:A1} of Theorem~\ref{thm:janson} holds.

Let $\tau:\multiG\to\mathbb{N}$ be the function which counts 
spanning trees in $\k$-regular multigraphs. Define $Y_{\mathcal{G}}$
as the restriction of $\tau$ to $\G$, and define $\Y=\tau\circ G$.
Then, $Y_{\mathcal{G}}$ is the number of spanning trees in a random
$G\in\G$, as in Section~\ref{ss:notation}. $\Y$ is accordingly
the number of spanning trees in the projection of a random pairing 
$P\in\P$. We will
investigate the asymptotic distribution of $Y_{\mathcal{G}}$
through analysis of $\Y$. 

In Section~\ref{sec:EY} we obtain an asymptotic formula for the expected
value of $Y$.  In Section~\ref{sec:E_YX} we analyse the interaction of the
number of spanning trees with short cycles, establishing that \ref{cond:A2}
holds for $\lambda_{\i}$ and $\zeta_{\i}$ as given in (\ref{eq:janson-parameters}).
This enables us to prove Theorem~\ref{thm:expectation} and to prove
that \ref{cond:A3} holds.  Then in Section~\ref{sec:EY2} we investigate
the second moment of $Y$.  We can prove that Condition~\ref{cond:A4} holds
when $\k=3$, leading to a proof of Theorem~\ref{thm:distribution-3}.
Using our partial calculations for general degrees, we provide numerical
evidence that strongly supports Conjecture~\ref{cnj:distribution}.


\section{Expected number of spanning trees }\label{sec:EY}

In this section we compute $\E Y$. Let $\T$ denote the set of labelled
trees on $n$ vertices, so that $\left|\T\right|=n^{n-2}$ by Cayley's
formula (see for example, \cite{Moon}).

Recalling the definition of $\Y$, 
we have
\[
\E\Y = \sum_{P\in\P}\frac{1}{\left|\P\right|}\,\tau\left(G\left(P\right)\right),
\]
and hence
\begin{equation}
\left|\P\right|\:\E\Y = \sum_{P\in\P}\sum_{T\in\T}M_{T,P},\label{eq:|P|EY-sum-0}
\end{equation}
where $M_{T,P}$ is the number of ways to embed $T$ into the multigraph
$G\left(P\right)$. (When $G\left(P\right)$ is simple, $M_{T,P}$
is zero or one).

Now, we want to condition on the degree of each of the $n$ vertices
in $T$. Define the set of possible degree sequences 
\[
\D=\left\{ \d\in\mathbb{N}^{n}:\quad\sum_{\i=1}^{n}\d_{\i}=2\left(n-1\right)\right\} .
\]
We can decompose $\left|\P\right|\:\E\Y$ as
\begin{equation}
\sum_{\d\in\D}\sum_{T\in\T}\one\left(T\sim\d\right)\,
  \sum_{P\in\P}\, M_{T,P},\label{eq:|P|EY-sum-1}
\end{equation}
where $T\sim\d$ denotes the event that vertex $\i$ has degree $\d_{\i}$
in $T$, for all $\i=1,\ldots n$.

To evaluate the innermost sum in (\ref{eq:|P|EY-sum-1}), fix some
$\d\in\D$ and some $T\in\T$ with $T\sim\d$. We need to count the
number of pairings that include $T$, with the embedding of $T$ identified.
That is, if for some pairing $P$, the tree $T$ can be embedded in
$G\left(P\right)$ in multiple ways, then we count each different
way separately.

Now, exactly $\d_{\i}$ of the prevertices in bucket $\i$ must contribute
to $T$, and there are $\falling{\k}{\d_{\i}}$ ways to choose and
order these prevertices. So, there are
\[
\prod_{\i=1}^{n}\falling{\k}{\d_{\i}}
\]
ways to pair up the $n-1$ edges corresponding to a copy of $T$.
Then, there are 
\[
\k n-2\left(n-1\right)=\left(\k-2\right)n+2
\]
prevertices remaining, which can be paired in $\nP\left(\left(\k-2\right)n+2\right)$
ways. This yields
\begin{equation}
\left|\P\right|\,\E\Y=
  \nP\left(\left(\k-2\right)n+2\right)\,
 \sum_{\d\in\D}\prod_{\i=1}^{n}\, \falling{\k}{\d_{\i}} \,
 \sum_{T\in\T}
   \, \one\left(T\sim\d\right).
\label{eq:|P|EY-sum-2}
\end{equation}
The inner sum in (\ref{eq:|P|EY-sum-2}) is the number of trees with
degree sequence $\d$, which is the multinomial
\begin{equation}
\binom{n-2}{\d_{1}-1,\ldots,\d_{n}-1}
  =\frac{\left(n-2\right)!}{\prod_{\i=1}^{n}\left(\d_{\i}-1\right)!}.
\label{eq:moon}
\end{equation}
(See for example Moon~\cite[Theorem 3.1]{Moon}.)

Hence
\[
\left|\P\right|\:\E\Y = \left(n-2\right)!\,\nP\left(n\k-2\left(n-1\right)\right)\sum_{\d\in\D}\prod_{\i=1}^{n}\frac{\falling{\k}{\d_{\i}}}{\left(\d_{\i}-1\right)!}.
\]
It follows that the total number of ways to choose a spanning tree on $n$
vertices and choose a partial pairing that projects to that tree is 
\begin{align}
(n-2)!\, \sum_{\d\in\D}\, \prod_{\i=1}^{n}\, 
  \frac{\falling{\k}{\d_{\i}}}{\left(\d_{\i}-1\right)!} 
  &=  (n-2)!\, \left[x^{2\left(n-1\right)}\right]\,
  \left(\sum_{\i=1}^{\infty}\, 
   \frac{\falling{\k}{\i}}{\left(\i-1\right)!}\, x^{\i}\right)^{n}\nonumber \\
 &=  (n-2)!\, \left[x^{2\left(n-1\right)}\right]\, 
   \left(\k x\left(1+x\right)^{\k-1}\right)^{n}\nonumber\\
 &=  (n-2)!\, \k^{n}\, \binom{(\k-1)n}{n-2}. \label{useful}
\end{align}
Hence, by Stirling's approximation and (\ref{eq:num-pairings-asymptotic})
we conclude that
\begin{align}
\left|\P\right|\,\E Y & =  \left(n-2\right)!\,\nP\left(n\k-2\left(n-1\right)\right)\,\k^{n}\, \binom{(\k-1)n}{n-2}\nonumber \\
 & \sim  \frac{\sqrt{2}\left(\k-1\right)^{1/2}}{n\left(\k-2\right)^{3/2}}\left(\k\left(\k-2\right)\left(\k-1\right)^{\k-1}\left(\frac{n}{\left(\k-2\right)e}\right)^{\k/2}\right)^{n}.\label{eq:|P|EY-asymptotic}
\end{align}

It follows that
\begin{align}
\E Y & = \frac{\left(n-2\right)!\,\nP\left(n\k-2\left(n-1\right)\right)}{\nP\left(n\k\right)}\,\k^{n}\, \binom{(\k-1)n}{n-2} \nonumber \\
 & \sim \frac{\left(\k-1\right)^{1/2}}{n\left(\k-2\right)^{3/2}}\left(\frac{\left(\k-1\right)^{\k-1}}{\left(\k^{2}-2\k\right)^{\k/2-1}}\right)^{n}.
\label{eq:EY-asymptotic}
\end{align}
Hence for $\k\ge3$ and $n$ sufficiently large, we have $\E\Y\ne0$.

\section{Interaction with short cycles}\label{sec:E_YX}

\global\long\def\Xr{X_{\r}}
\global\long\def\newP{\mathcal{P}_{n,\r}}
\global\long\def\s{s}
\global\long\def\nC{n_{\r}}
\global\long\def\NC{\left|\r\right|}
\global\long\def\dI{\delta'}
\global\long\def\DI{\mathcal{D}_{Q}}
\global\long\def\nq{t}
\global\long\def\W{E}
\global\long\def\Dd{\mathcal{D}_{\dI}}
\global\long\def\z{\mu}
\global\long\def\zz{\kappa}
\global\long\def\GF{\Lambda}
\global\long\def\gf{g}
\global\long\def\R{\mathcal{R}_{n,\r}}
\global\long\def\Q{\mathcal{I}_{\i}}
\global\long\def\Qr{\mathcal{Q}_{\r}}

Recall that $X_{\i}=\gamma_{\i}\circ G$ is the number of cycles of
length $\i$ in the projection of a random pairing $P\in\P$.
For some fixed $m\ge0$, $\r\in\mathbb{N}^{m}$, let $\Xr=\prod_{\i=1}^{m}\falling{X_{\i}}{\r_{\i}}$.
In this section we will compute an asymptotic formula for $\E\left[\Y\Xr\right]/\E\Y$,
in the form required by Condition \ref{cond:A2'}.

We have
\[
\E\left[\Y\Xr\right]=\sum_{P\in\P}\frac{1}{\left|\P\right|}\,\tau\left(G\left(P\right)\right)\prod_{\i=1}^{m}\falling{\gamma_{\i}\left(G\left(P\right)\right)}{\r_{\i}}.
\]
Note that
\[
\prod_{\i=1}^{m}\falling{\gamma_{\i}\left(G\left(P\right)\right)}{\r_{\i}}
\]
is the number of ways to choose, for each $\i\in\{ 1,\ldots, m\}$,
 an ordered set of $\r_{\i}$ cycles
of length $\i$. 
This will result in an ordered set of 
\[
\NC=\sum_{\i=1}^{m}\r_{\i}
\]
cycles.

We make the decomposition 
\[
\prod_{\i=1}^{m}\falling{\gamma_{\i}\left(G\left(P\right)\right)}{\r_{\i}}
  =\gamma_{\r}^{\left(0\right)}+\gamma_{\r}',
\]
where $\gamma_{\r}^{\left(0\right)}$ is the number of ordered sets of cycles
in which each cycle is disjoint, and $\gamma_{\r}'$ is the number
of ordered sets in which some vertices are shared between multiple cycles.
We can further decompose $\gamma_{\r}'$ by the structure of the interaction
between the cycles. That is, according to the multigraph that is the
union of the cycles, and the specification of which edges of this
union belong to which cycle. This expresses $\gamma_{\r}'$ as a sum
of terms $\gamma_{\r}^{\left(\i\right)}$. The number of terms $J$
in this decomposition depends on $\rho$, but is $O\left(1\right)$
as $n\to\infty$.

Define 
\[
\W^{\left(\i\right)}=\sum_{P\in\P}\,
  \tau\left(G\left(P\right)\right)\, \gamma_{\r}^{\left(\i\right)},
\]
so that we have $\left|\P\right|\,\E\left[\Y\Xr\right]=\sum_{\i=1}^{J}\W^{\left(\i\right)}$.

We proceed to calculate $\W^{\left(0\right)}$. As is standard when
applying this method (see for example, \cite[Theorem 9.6]{JLR}),
$\E\left[\Y\Xr\right]$ is asymptotically dominated by $\W^{\left(0\right)}$
(the contribution due to disjoint cycles). See Lemma \ref{lem:Ej}
for some justification for this fact.

Let $\mathcal{C}_{n,\i}$ be the set of all $\i$-cycles on the vertex
set $\left\{ 1,\dots,n\right\} $, and define the Cartesian product
\[
\mathcal{C}_{n,\r}=\prod_{\i=1}^{m}\mathcal{C}_{n,\i}^{\r_{\i}}.
\]
Each $R\in\mathcal{C}_{n,\r}$ is an ordered set of $\left|\r\right|$
cycles. We use the notation $R_{\i,\ii}$ for the $\ii$th cycle of
length $\i$ in $R$. Next, define
\[
\R = \left\{ R\in\mathcal{C}_{n,\r}:\mbox{ the cycles in $R$ are pairwise
 disjoint} \right\} .
\]
Similarly to (\ref{eq:|P|EY-sum-0}), we have
\[
\W^{\left(0\right)} = \sum_{R\in\R}\sum_{T\in\T}\sum_{P\in\P}M_{\left(T,R\right),P},
\]
where $M_{\left(T,R\right),P}$ is the number of ways to embed the
tree $T$ and the cycles in $R$ into the multigraph $G\left(P\right)$.

We further condition on the edge intersection between the embedding
of $T$ and the cycles in $R$. We use a binary sequence of length
$\i$ to encode the intersection of an $\i$-cycle with a spanning
tree. Picking an arbitrary start vertex and direction for the cycle,
if the $\ii$th edge of the cycle is to be included in the intersection,
then the $\ii$th element of the corresponding sequence is one; otherwise
it is zero. All sequences $q\in\left\{ 0,1\right\} ^{\i}$ represent
possible intersections, except the sequence $\left(1,\dots,1\right)$,
because a tree contains no cycles.

Define the set of all possible intersection sequences for a cycle
of length $\i$, by 
\[
\Q=\left\{ 0,1\right\} ^{\i}\setminus \left(1,\dots,1\right).
\]
Also, define the Cartesian product
\[
\Qr=\prod_{\i=1}^{m}\Q^{\r_{\i}}.
\]
So, for each $R\in \R$, specifying some $Q\in\Qr$ fully specifies the
intersection between the cycles in $R$ and a tree $T$.

We have 
\begin{equation}
\W^{\left(0\right)}=\sum_{Q\in\Qr}\sum_{R\in\R}\sum_{T\in\T}\sum_{P\in\P}M_{\left(T,R,Q\right),P},\label{eq:|P|E[YX]}
\end{equation}
where $M_{\left(T,R,Q\right),P}$ is the number of ways to embed $T$
and the cycles in $R$ in $P$, such that the intersection between
the embedding of $T$ and the cycle $R_{\i,\ii}$ is consistent with
$Q_{\i,\ii}$, for $\i = 1,\ldots, m$ and $\ii = 1,\ldots, \r_{\i}$.

Fixing $Q\in\Qr$, we will now evaluate the innermost triple sum in
(\ref{eq:|P|E[YX]}). Consider the following process:

\renewcommand{\theenumi}{\arabic{enumi}}
\renewcommand{\labelenumi}{\theenumi.}
\setenumerate{itemsep=5pt}
\begin{enumerate}
\item \label{step:3}Choose some $R\in\R$.
\item \label{step:4} Choose a partial pairing that projects to $R$.
\item \label{step:6} Extend this to a pairing of a spanning tree consistent
with $Q$.
\item \label{step:7}Pair the remaining prevertices arbitrarily.
\end{enumerate}

We will find that the number of ways to complete each step is independent
of the other steps. Then, $\W^{\left(0\right)}$ is a product of the
number of ways to complete each step, summed over all $Q\in\Qr$.

First let $\nC=\sum_{\i=1}^{m}\i\r_{\i}$ be the total number of vertices
in each $R\in\R$. The number of ways to choose the vertices for some
$R\in\R$ is
\[
\binom{n}{\nC}
\]
and the number of different arrangements of disjoint cycles on those
vertices is 
\[
\frac{\nC!}{\prod_{\i=1}^{m}\left(\i\chi\left(\i\right)\right)^{\r_{\i}}},
\]
where $\i\chi\left(\i\right)$ is the size of the automorphism group
of a $\i$-cycle:
\[
\chi\left(\i\right)=\begin{cases}
1 & \mbox{for }\i\le2\\
2 & \mbox{for }\i>2.
\end{cases}
\]
That is, the number of ways to complete Step \ref{step:3} is 
\[
\s_{\ref{step:3}}^{\left(0\right)}=\frac{n!}{\left(n-\nC\right)!\prod_{\i=1}^{m}\left(\i\chi\left(\i\right)\right)^{\r_{\i}}}.
\]
Next, the number of ways to complete Step \ref{step:4} is
\[
\s_{\ref{step:4}}^{\left(0\right)}=\prod_{\i=1}^{m}\left(\frac{\chi\left(\i\right)\left(\k\left(\k-1\right)\right)^{\i}}{2}\right)^{\r_{\i}}.
\]
Note for future reference that we have 
\begin{equation}
\s_{\ref{step:3}}^{\left(0\right)}\s_{\ref{step:4}}^{\left(0\right)}\sim n^{\nC}\prod_{\i=1}^{m}\left(\frac{\left(\k\left(\k-1\right)\right)^{\i}}{2\i}\right)^{\r_{\i}}.\label{eq:f_3f_4}
\end{equation}
Next, we count the number of ways to extend this pairing to
a tree $T$ consistent with $Q$. We do this by constructing a new
irregular pairing model $\newP$ from the prevertices still unpaired after
Step \ref{step:4}. Recall that $Q$ describes a union of disjoint
paths; for each of these paths, combine the unpaired prevertices remaining
in each constituent vertex of the path to form a \emph{super-bucket}. If the
path has $\ii$ vertices then the resulting super-bucket
has $\ii\left(\k-2\right)$ prevertices. Let $\left|Q\right|$ be the number
of super-buckets formed in this way, so the total number of buckets in
$\newP$ is $n':=n-\nC+\left|Q\right|$.

Now, consider an extension of a pairing of cycles from Step \ref{step:4},
to a (partial) pairing of the edges of a tree $T$ consistent with
$Q$, as per Step \ref{step:6}. The pairs from this extension correspond
uniquely to a (partial) pairing $P'$ in the pairing model $\newP$.
By the construction of $\newP$, the projection $T'=G\left(P'\right)$
of this pairing is simply $T$ with some subpaths contracted
to single vertices. Since contracting edges of a tree cannot create
cycles, $G\left(P'\right)$ is itself a (spanning) tree. Similarly,
every pairing of a tree in $\newP$ corresponds to an extension of
a pairing of cycles to a pairing of a tree in $\P$ consistent with
$Q$. So the number of ways $\s_{\ref{step:6}}^{\left(0\right)}$
to complete Step \ref{step:6} equals the number of ways to
choose and pair up a spanning tree in $\newP$.

We will perform this count as in Section \ref{sec:EY}, by conditioning
on the degree in $T'$ of each bucket in $\newP$. Put an arbitrary
ordering on the $\left|Q\right|$ super-buckets, and let $d_{\i}$
be the number of prevertices in the $\i$th super-bucket. For a degree
sequence $\d$, let $\left|\d\right|$ be its degree sum. Define the
sets
\begin{align*}
\DI & = \left\{ \dI\in\mathbb{N}^{\left|Q\right|}:\quad\dI_{\i}\le d_{\i}\mbox{ for all }\i\right\} ,\\
\Dd & = \left\{ \d\in\mathbb{N}^{n-\nC}:\quad\sum_{\i=1}^{n-\nC}\d_{\i}=2\left(n'-1\right)-\left|\dI\right|\right\} .
\end{align*}
The set $\DI$ contains all possible degree-in-$T'$ sequences for
the $\left|Q\right|$ super-buckets. For some $\dI\in\DI$, the set
$\Dd$ contains all possible degree sequences for the $n-\nC$ remaining
ordinary buckets. So, we have
\[
\s_{\ref{step:6}}^{\left(0\right)}=
  \sum_{\dI\in\DI} \sum_{\d\in\Dd}\, \sum_{T'\in\mathcal{T}_{n'}}\,
  \one\left(T'\sim\left(\dI,\d\right)\right)\, \sum_{P'\in\newP}M_{T',P},
\]
where $T'\sim\left(\dI,\d\right)$ denotes the event that the super-buckets
have degree-in-$T'$ sequence $\dI$ and the remaining vertices have
degree-in-$T'$ sequence $\d$. Proceeding as before, after fixing
some $\left(T',\dI,\d\right)$, there are
\[
\left(\prod_{\i=1}^{n-\nC}\falling \k{\d_{\i}}\right)\left(\prod_{\i=1}^{\left|Q\right|}\falling{d_{\i}}{\dI_{\i}}\right)
\]
ways to pair the edges of $T'$. There are 
\[
\frac{\left(n'-2\right)!}{\left(\prod_{\i=1}^{n-\nC}\left(\d_{\i}-1\right)!\right)\left(\prod_{\i=1}^{\left|Q\right|}\left(\dI_{\i}-1\right)!\right)}
\]
trees with $T'\sim\left(\dI,\d\right)$, by (\ref{eq:moon}). So,
we have
\begin{equation}
\s_{\ref{step:6}}^{\left(0\right)}=\sum_{\dI\in\DI}A_{\dI}\prod_{\i=1}^{\left|Q\right|}\frac{\falling{d_{\i}}{\dI_{\i}}}{\left(\dI_{\i}-1\right)!},\label{eq:f_6}
\end{equation}
where
\begin{align}
A_{\dI} & = \left(n'-2\right)!\sum_{\d\in\Dd}\prod_{\i=1}^{n-\nC}\frac{\falling \k{\d_{\i}}}{\left(\d_{\i}-1\right)!}\nonumber \\
 & = \left(n'-2\right)!\,\left[x^{2\left(n'-1\right)-\left|\dI\right|}\right]\left(\k x\left(1+x\right)^{\k-1}\right)^{n-\nC}\nonumber \\
 & = \left(n'-2\right)!\,\k^{n-\nC}\,
  \binom{(\k-1)(n-\nC)}{2(n'-1)-|\dI|-(n-\nC)}\nonumber \\
 & \sim \frac{\left(\k-2\right)^{2\left|Q\right|-\left|\dI\right|-5/2}\,\left(\k-1\right)^{1/2}}{n^{\nC-\left|Q\right|+2}}\left(\frac{\left(\k-2\right)^{\k-2}}{\k\left(\k-1\right)^{\k-1}}\right)^{\nC}\left(\frac{\k\left(\k-1\right)^{\k-1}n}{e\left(\k-2\right)^{\k-2}}\right)^{n}.\label{eq:A_d'-asymptotic}
\end{align}
Finally, for Step \ref{step:7} there are $\k n-2\nC-2\left(n'-1\right)=\left(\k-2\right)n-2\left(\left|Q\right|-1\right)$
prevertices remaining, which can be paired in 
\begin{equation}
\s_{\ref{step:7}}^{\left(0\right)}=\nP\left(\left(\k-2\right)n-2\left(\left|Q\right|-1\right)\right)\sim\sqrt{2}\left(\frac{\left(\k-2\right)n}{e}\right)^{\left(\k/2-1\right)n}\left(\left(\k-2\right)n\right)^{1-\left|Q\right|}\label{eq:f_7}
\end{equation}
 ways.

Combining (\ref{eq:|P|EY-asymptotic}), (\ref{eq:f_3f_4}), (\ref{eq:f_6}),
(\ref{eq:A_d'-asymptotic}) and (\ref{eq:f_7}), we have
\begin{align}
\frac{\W^{\left(0\right)}}{\left|\P\right|\,\E\Y} 
 & = \sum_{Q\in\Qr}\, 
  \frac{\s_{\ref{step:3}}^{\left(0\right)}\s_{\ref{step:4}}^{\left(0\right)}\s_{\ref{step:6}}^{\left(0\right)}\s_{\ref{step:7}}^{\left(0\right)}}
 {\left|\P\right|\,\E\Y}\nonumber \\
 & \to \left(\prod_{\i=1}^{m}\left(\frac{\left(\k\left(\k-1\right)\right)^{\i}}{2\i}\right)^{\r_{\i}}\right)\nonumber \\
 &  \quad\times \sum_{Q\in \Qr}\, \sum_{\dI\in\DI}\left(\k-2\right)^{\left|Q\right|-\left|\dI\right|}\left(\frac{\left(\k-2\right)^{\k-2}}{\k\left(\k-1\right)^{\k-1}}\right)^{\nC}\prod_{\i=1}^{\left|Q\right|}\frac{\falling{d_{\i}}{\dI_{\i}}}{\left(\dI_{\i}-1\right)!}.\label{eq:E0}
\end{align}

As is standard in these arguments, the only significant contribution to
$\E\left[\Y \Xr\right]$ comes from $\W^{\left(0\right)}$, where the cycles
do not overlap.  For completeness we sketch a proof of this below.

\begin{lemma}
\label{lem:Ej}$\E\left[\Y\Xr\right]$ is dominated by the contribution
from $\W^{\left(0\right)}$. That is, 
\[
\frac{\E\left[\Y\Xr\right]}{\E\Y}\sim\frac{\W^{\left(0\right)}}{\left|\P\right|\,\E\Y}.
\]
\end{lemma}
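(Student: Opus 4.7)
The plan is to show that each term $\W^{(i)}$ with $i \geq 1$ is asymptotically negligible compared with $\W^{(0)}$; since the number $J$ of overlap patterns is bounded as $n \to \infty$, the lemma follows upon dividing through by $|\P|\,\E\Y$. For each $i \geq 1$, the term $\gamma_\r^{(i)}$ counts ordered configurations of cycles whose union (with the edge-to-cycle attribution) is a fixed edge-labelled multigraph $H_i$. Because at least two cycles share a vertex in such a configuration, the vertex set of $H_i$ has size $v_i \leq \nC - 1$.

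I would repeat the four-step count of Section~\ref{sec:E_YX} with $H_i$ in place of a disjoint union of cycles. Step~\ref{step:3} -- embedding $H_i$ into $\{1,\dots,n\}$ -- contributes $\Theta(n^{v_i})$, losing a factor of at least $n^{\nC - v_i} \geq n$ compared with $\s_{\ref{step:3}}^{(0)}\s_{\ref{step:4}}^{(0)}$ in (\ref{eq:f_3f_4}). Step~\ref{step:4} (the partial pairing projecting to $H_i$) is $O(1)$, independent of $n$, just as in the disjoint case. Step~\ref{step:6} is handled by the same super-bucket construction as in (\ref{eq:f_6}): super-buckets are now formed from the unpaired prevertices along the paths in $H_i$ that are to be included in $T'$, and the resulting spanning-tree count has the same leading $n$-asymptotics as (\ref{eq:A_d'-asymptotic}), up to bounded combinatorial constants depending only on $H_i$. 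Step~\ref{step:7} contributes $\nP((\k-2)n + O(1))$, which matches $\s_{\ref{step:7}}^{(0)}$ up to a bounded factor by (\ref{eq:num-pairings-asymptotic}). Collecting these contributions yields $\W^{(i)}/\W^{(0)} = O(n^{v_i - \nC}) = O(n^{-1})$, and summing over the $O(1)$ overlap patterns gives the stated asymptotic equivalence.

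The main obstacle is the verification that Step~\ref{step:6} really does produce asymptotics matching (\ref{eq:A_d'-asymptotic}) up to constants depending only on $H_i$, rather than on $n$. The super-bucket sizes and the degree-sequence enumeration depend on $H_i$, and one must check that the coefficient extraction $[x^{2(n'-1) - |\dI|}](\k x(1+x)^{\k-1})^{n - v_i}$ has the same $n \to \infty$ behaviour as in the disjoint case, since $n - v_i$ and $n - \nC$ differ only by a bounded amount. This is routine bookkeeping -- the combinatorial data of $H_i$ enters only through bounded parameters -- but it is the step where all of the structure-dependent constants accumulate, and constitutes the bulk of a careful proof.
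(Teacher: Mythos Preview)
Your approach is exactly the paper's: redo the four-step count for a general overlap pattern and compare step by step. Steps~\ref{step:3} and~\ref{step:4} are handled correctly. The gap is in Step~\ref{step:7}. You assert that $\nP\bigl((\k-2)n+O(1)\bigr)$ matches $\s_{\ref{step:7}}^{(0)}$ ``up to a bounded factor by~(\ref{eq:num-pairings-asymptotic})''. This is false: for fixed integers $c_1\neq c_2$ one has $\nP(M+2c_1)/\nP(M+2c_2)=\Theta(M^{c_1-c_2})$, a genuine power of $n$, because the exponent in~(\ref{eq:num-pairings-asymptotic}) moves with the argument. A bounded additive shift in the argument of $\nP$ produces a polynomial, not bounded, multiplicative change.

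This matters precisely when the cycles in $H_i$ share \emph{edges}: then the union $U$ has $\|U\|<n_\rho$ edges, Steps~\ref{step:4} and~\ref{step:6} consume fewer prevertices than in the disjoint case, and Step~\ref{step:7} \emph{gains} a factor of order $n^{\,n_\rho-\|U\|}$. The paper therefore tracks both the vertex count $|U|$ and the edge count $\|U\|$ of $U$, obtaining
\[
\frac{\W^{(i)}}{\W^{(0)}}=O\bigl(n^{|U|-n_\rho}\bigr)\cdot O(1)\cdot O(1)\cdot O\bigl(n^{\,n_\rho-\|U\|}\bigr)=O\bigl(n^{|U|-\|U\|}\bigr),
\]
and finishes by noting that any non-disjoint union of distinct cycles has strictly more edges than vertices, so $|U|-\|U\|\le -1$. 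Your claimed bound $O(n^{v_i-n_\rho})$ is in fact too small whenever edges are shared (the true ratio is $\Theta(n^{|U|-\|U\|})$), so the conclusion is right but the accounting is not; the ``main obstacle'' you flag in Step~\ref{step:6} is really the combined bookkeeping of Steps~\ref{step:6} and~\ref{step:7}, and the edge count $\|U\|$ is the missing parameter.
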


\begin{proof}
We can estimate general $\W^{\left(\i\right)}$ with some slight modifications
to the above calculations. We would need a different $\R'\subseteq\mathcal{C}_{n,\r}$
that contains all possible ways to embed an ordered set of cycles
with a particular union $U$ into the vertex set $\left\{ 1,\dots,n\right\} $.
The intersection between a spanning tree and the cycles in some $R\in\R$
would then be a subforest of $U$, so it would be more complicated
to explicitly define a set $\Qr$ that encodes all possibilities for
the intersection. However, the number of possible intersections is
still independent of $n$.

Using the same 4 steps, the decomposition $\W^{\left(\i\right)}=\s_{\ref{step:3}}^{\left(\i\right)}\s_{\ref{step:4}}^{\left(\i\right)}\s_{\ref{step:6}}^{\left(\i\right)}\s_{\ref{step:7}}^{\left(\i\right)}$
is still valid. Let $\left|U\right|$ and $\left\Vert U\right\Vert $
be the number of vertices and edges in the multigraph $U$, respectively.
Carefully adjusting the calculations for $\W^{\left(0\right)}$, we
have $\s_{\ref{step:3}}^{\left(\i\right)}/\s_{\ref{step:3}}^{\left(0\right)}=O\left(n^{\left|U\right|-\nC}\right)$,
$\s_{\ref{step:4}}^{\left(\i\right)}/\s_{\ref{step:4}}^{\left(0\right)}=O\left(1\right)$
and $\s_{\ref{step:6}}^{\left(\i\right)}/\s_{\ref{step:6}}^{\left(0\right)}=O\left(1\right)$.
For Step \ref{step:7} there would be $\k n-2\left\Vert U\right\Vert -2\left(n'-1\right)$
prevertices remaining, so $\s_{\ref{step:7}}^{\left(\i\right)}/\s_{\ref{step:7}}^{\left(0\right)}=O\left(n^{\nC-\left\Vert U\right\Vert }\right)$.

We conclude that $\W^{\left(\i\right)}/\W^{\left(0\right)}=O\left(n^{\left|U\right|-\left\Vert U\right\Vert }\right)$.
Any non-disjoint union between distinct cycles has more edges than
vertices, so the lemma is proved.
\end{proof}
We now want to to express (\ref{eq:E0}) in the form required by \ref{cond:A2'}.
So, we consider each cycle independently. For a sequence $q\in\Q$,
let $q\left[\ii\right]$ be the number of paths with $\ii$ vertices
in the intersection encoded by $q$. So, for $Q\in\Qr$ we have 
\[
\left|Q\right|=\sum_{\i=1}^{m}\sum_{\iii=1}^{\r_{\i}}\sum_{\ii=1}^{\i}Q_{\i,\iii}\left[\ii\right].
\]
Also, let $\left|q\right|=\sum_{\ii=1}^{\i}q\left[\ii\right]$ be
the total number of paths in the intersection encoded by $q$.

Now, recall that if the $\i$th super-bucket was collapsed from a
path of length $\ii$, then $d_{\i}=\ii\left(\k-2\right)$. Also recall
that $\nC=\sum_{\i=1}^{m}\i\r_{\i}$ and note that 
\[
\left(\k-2\right)^{\left|Q\right|-\left|\dI\right|}=\prod_{\i=1}^{\left|Q\right|}\left(\k-2\right)^{1-\dI_{\i}}.
\]
As a result, we have
\[
\frac{\E\left[\Y\Xr\right]}{\E\Y}\rightarrow \prod_{\i=1}^{m}\left(\lambda_{\i}'\right)^{\r_{\i}},
\]
where
\[
\lambda_{\i}' = \frac{\left(\k\left(\k-1\right)\right)^{\i}}{2\i}\left(\frac{\left(\k-2\right)^{\k-2}}{\k\left(\k-1\right)^{\k-1}}\right)^{\i}\sum_{q\in\Q}\prod_{\ii=1}^{\i}\left(\sum_{\iii=1}^{\ii\left(\k-2\right)}\frac{\falling{\ii\left(\k-2\right)}{\iii}}{\left(\iii-1\right)!\,\left(\k-2\right)^{\iii-1}}\right)^{q\left[\ii\right]}.
\]
Note that $\iii$ takes the role of $\dI_{\i}$ for the $\i$th super-bucket.
We have proved that Condition \ref{cond:A2'} is satisfied. It remains
to simplify our expression for $\lambda_{\i}'$. We have
\begin{align*}
\lambda_{\i}' & = \frac{1}{2\i}\,
  \left(\frac{\k-2}{\k-1}\right)^{\i\left(\k-2\right)}\,
  \sum_{q\in\Q}\, \prod_{\ii=1}^{\i}\, \left(\ii\left(\k-2\right)\,
  \sum_{\iii=0}^{\ii\left(\k-2\right)-1}\,
  \binom{\ii(\k-2)-1}{\iii}\, 
  \left(\k-2\right)^{-\iii}\right)^{q\left[\ii\right]}\\
 & = \frac{1}{2\i}\left(\frac{\k-2}{\k-1}\right)^{\i\left(\k-2\right)}\sum_{q\in\Q}\prod_{\ii=1}^{\i}\left(\ii\left(\k-2\right)\left(\frac{1}{\k-2}+1\right)^{\ii\left(\k-2\right)-1}\right)^{q\left[\ii\right]}\\
 & = \frac{1}{2\i}\sum_{q\in\Q}\left(\frac{\left(\k-2\right)^{2}}{\k-1}\right)^{\left|q\right|}\prod_{\ii=1}^{\i}\ii^{q\left[\ii\right]}.
\end{align*}
Now, recall that $\left(1,\dots,1\right)\notin\Q$. So, to evaluate
the sum over $q$, we may identify a particular element in the sequence
to be zero. By symmetry, we arbitrarily choose the last. We also condition
on $\left|q\right|$: define
\begin{equation}
\z  =  \frac{\left(\k-2\right)^{2}}{\k-1},\qquad
\GF_{\i,\nq}  =  \sum_{\substack{q\in\Q\\
\left|q\right|=\nq\\
q_{\i}=0
}
}\z^{\nq}\prod_{\ii=1}^{\i}\ii^{q\left[\ii\right]}.
\end{equation}
Note that $\GF_{\i,1}=\i\z$ for all $\i$, because the only sequence
$q\in\Q$ with $\left|q\right|=1$ and $q_{\i}=0$ is $\left(1,\dots,1,0\right)$.
For $\left|q\right|>1$, the first path in (the intersection encoded
by) a sequence can contain anywhere between 1 and $\i-1$ vertices.
Ranging over the possibilities, we have
\[
\GF_{\i,\nq}=\sum_{\ii=1}^{\i-1}\ii\z\GF_{\i-\ii,\nq-1}
\]
for $\nq>0$. To solve this recurrence, define the generating function
\[
\GF\left(x,y\right) = \sum_{\i=1}^{\infty}\sum_{\nq=1}^{\infty}\GF_{\i,\nq}x^{\i}y^{\nq}.
\]
We have
\[
\GF\left(x,y\right)-\sum_{\i=1}^{\infty}\GF_{\i,1}x^{\i}y=\sum_{\i=1}^{\infty}\sum_{\nq=2}^{\infty}\sum_{\ii=1}^{\i-1}\ii\z\GF_{\i-\ii,\nq-1}x^{\i}y^{\nq},
\]
so
\begin{align*}
\GF\left(x,y\right) & = \sum_{\ii=1}^{\infty}\ii x^{\ii}y\z\sum_{\i=\ii+1}^{\infty}\sum_{\nq'=1}^{\infty}\GF_{\i-\ii,\nq'}x^{\i-\ii}y^{\nq'}+\sum_{\i=1}^{\infty}\i x^{\i}y\z\\
 & = \left(\GF\left(x,y\right)+1\right)\sum_{\ii=1}^{\infty}\ii x^{\ii}y\z.
\end{align*}
Now, defining
\[
\gf\left(x\right)=\sum_{\ii=1}^{\infty}\ii x^{\ii}\z=\frac{x\z}{\left(1-x\right)^{2}},
\]
we have 
\[
\GF\left(x,y\right)=\frac{\gf\left(x\right)y}{1-\gf\left(x\right)y}.
\]
If $q\in\Q$, then there are $\i$ positions to place a zero, and
if $\left|q\right|=\nq$ then there are $\nq$ zeros in $q$.  Hence
\begin{align*}
\lambda_{\i}' & =  \frac{1}{2\i}\sum_{\nq=1}^{\i}\frac{\i\GF_{\i,\nq}}{\nq}\\
  &=  \frac{1}{2}\left[x^{\i}\right]\sum_{\nq=1}^{\i}\frac{1}{\nq}\left[y^{\nq-1}\right]\frac{\gf\left(x\right)}{1-\gf\left(x\right)y}\\
 & =  \frac{1}{2}\left[x^{\i}\right]\int_{0}^{1}\frac{\gf\left(x\right)}{1-\gf\left(x\right)y}\mathrm{d}y\\
 & =  -\frac{1}{2}\left[x^{\i}\right]\log\left(1-\gf\left(x\right)\right).
\end{align*}
Now, defining $\zz=\sqrt{\k-1}$ we
have
\[
1-\gf\left(x\right)=\frac{1+x^{2}-\left(2+\z\right)x}{\left(1-x\right)^{2}}=\frac{\left(1-\zz^{2}x\right)\left(1-\zz^{-2}x\right)}{\left(1-x\right)^{2}},
\]
so 
\begin{align}
\lambda_{\i}' & = \frac{1}{2}\left[x^{\i}\right]\left(2\log\left(1-x\right)-\log\left(1-\zz^{2}x\right)-\log\left(1-\zz^{-2}x\right)\right) \nonumber\\
 & = \frac{1}{2}\left[x^{\i}\right]\sum_{\ii=1}^{\infty}\frac{-2x^{\ii}+\left(\zz^{2}x\right)^{\ii}+\left(\zz^{-2}x\right)^{\ii}}{\ii}\nonumber\\
 & = \frac{1}{2\i}\left(\zz^{\i}-\zz^{-\i}\right)^{2}\nonumber\\
 & = \frac{\left(\left(\k-1\right)^{\i}-1\right)^{2}}{2\i\left(\k-1\right)^{\i}}
\label{eq:lambda'}
\end{align}
for $\i \geq 1$.

To complete this section we will 
establish that conditions \ref{cond:A2} and \ref{cond:A3} of 
Theorem~\ref{thm:janson} hold, and prove
Theorem~\ref{thm:expectation}.

\begin{lemma}
Let $\k\geq 3$ be a fixed integer.  Then 
Conditions~\ref{cond:A2} and~\ref{cond:A3} of Theorem~\ref{thm:janson} are 
satisfied, and 
\[
\exp\left(\sum_{\i=1}^{\infty}\lambda_{\i}\wea_{\i}^{2}\right)=\frac{\k^{2}}{\sqrt{\left(\k-1\right)\left(\k-2\right)\left(\k^{2}-\k+1\right)}}.
\]
\label{EY2/EY2-target}
\end{lemma}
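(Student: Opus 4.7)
The plan is to obtain Condition~\ref{cond:A2} as an immediate consequence of Lemma~\ref{lem:janson} applied to the formula (\ref{eq:lambda'}) for $\lambda_{\i}'$ already derived in this section, and then to verify Condition~\ref{cond:A3} together with the explicit value of the exponential sum by elementary manipulation of logarithmic series.

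For~\ref{cond:A2}, I would compute the ratio
\[
\frac{\lambda_{\i}'}{\lambda_{\i}} = \frac{((\k-1)^{\i}-1)^{2}}{(\k-1)^{2\i}},
\]
so that by Lemma~\ref{lem:janson} the required $\wea_{\i}$ satisfies $1+\wea_{\i} = \lambda_{\i}'/\lambda_{\i}$. Expanding the square on the right and collecting terms then shows that this $\wea_{\i}$ agrees exactly with the one given in (\ref{eq:janson-parameters}), as needed.

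For the remaining two claims I set $a := 1/(\k-1)$. Substituting the closed forms for $\lambda_{\i}$ and $\wea_{\i}$ and expanding $(2(\k-1)^{\i}-1)^{2} = 4(\k-1)^{2\i} - 4(\k-1)^{\i} + 1$ yields
\[
\lambda_{\i}\wea_{\i}^{2} = \frac{1}{2\i}\bigl(4a^{\i} - 4a^{2\i} + a^{3\i}\bigr).
\]
The leading term $2a^{\i}/\i$ is summable whenever $\k\ge 3$, giving~\ref{cond:A3}. Summing the three series using $\sum_{\i\ge1}x^{\i}/\i = -\log(1-x)$ at $x=a$, $a^{2}$, $a^{3}$, and then factoring $1-a^{2}=(1-a)(1+a)$ and $1-a^{3}=(1-a)(1+a+a^{2})$, cancels all the $(1-a)$ factors and produces
\[
\exp\Bigl(\sum_{\i=1}^{\infty}\lambda_{\i}\wea_{\i}^{2}\Bigr) = \frac{(1+a)^{2}}{\sqrt{(1-a)(1+a+a^{2})}}.
\]
Substituting $a = 1/(\k-1)$ and using $1+a = \k/(\k-1)$, $1-a = (\k-2)/(\k-1)$, and $1+a+a^{2} = (\k^{2}-\k+1)/(\k-1)^{2}$ clears the remaining powers of $(\k-1)$ and recovers the target $\k^{2}/\sqrt{(\k-1)(\k-2)(\k^{2}-\k+1)}$.

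The whole argument is a routine algebraic simplification; the only places needing care are matching $\wea_{\i}$ exactly with (\ref{eq:janson-parameters}) and correctly tracking the various factors of $(\k-1)$ in the final substitution.
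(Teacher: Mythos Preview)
Your proposal is correct and follows essentially the same route as the paper: invoke Lemma~\ref{lem:janson} with the computed $\lambda_{\i}'$ to obtain \ref{cond:A2}, then expand $\lambda_{\i}\wea_{\i}^{2}$ term-by-term and sum the resulting geometric-type logarithmic series to get both \ref{cond:A3} and the closed form. The only differences are cosmetic --- you introduce the abbreviation $a=1/(\k-1)$ and explicitly verify that $1+\wea_{\i}=\lambda_{\i}'/\lambda_{\i}$ matches (\ref{eq:janson-parameters}), which the paper leaves implicit.
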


\begin{proof}
The calculations of this section show that Condition~\ref{cond:A2'}
of Lemma~\ref{lem:janson} is satisfied with $\lambda_{\i}'$ given by
(\ref{eq:lambda'}).  Then Lemma~\ref{lem:janson} guarantees that \ref{cond:A2}
is satisfied.
Using the Taylor expansion of $\log\left(1-z\right)$,
it follows from (\ref{eq:janson-parameters}) that
\begin{align*}
& \sum_{\i=1}^{\infty}\lambda_{\i}\wea_{\i}^{2} \\
&=
\sum_{\i=1}^{\infty} \, \dfrac{1}{2\i}\left(4\left(\k-1\right)^{-\i}-4\left(\k-1\right)^{-2\i}+\left(\k-1\right)^{-3\i}\right)\\
&=\dfrac{1}{2}\left(-4\log\left(1-\left(\k-1\right)^{-1}\right)+4\log\left(1-\left(\k-1\right)^{-2}\right)-\log\left(1-\left(\k-1\right)^{-3}\right)\right).
\end{align*}
Taking the exponential of both sides and rearranging establishes
the stated expression for 
$\exp\left(\sum_{\i = 1}^\infty \lambda_{\i}\wea_{\i}^2\right)$,
which is finite for $\k\geq 3$.  Hence Condition~\ref{cond:A3} holds, 
as required.
\end{proof}

\begin{proof}[Proof of Theorem \ref{thm:expectation}]
We know that condition~\ref{cond:A2} holds (as proved above), and hence
\[
\E Y_{\mathcal{G}}  =  \E\left[Y|X_{1}=X_{2}=0\right]\\
  \rightarrow   \E Y\,\exp\left(-\lambda_{1}\wea_{1}-\lambda_{2}\wea_{2}\right).
\]
Substituting using (\ref{eq:janson-parameters}) and (\ref{eq:EY-asymptotic}) 
completes the proof.
\end{proof}

\section{The second moment}\label{sec:EY2}

\global\long\def\r{\delta^{1}}
\global\long\def\w{\delta^{2}}
\global\long\def\rr{\eta^{1}}
\global\long\def\ww{\eta^{2}}
\global\long\def\q{\eta^{3}}
\global\long\def\floor#1{\left\lfloor #1\right\rfloor }
\global\long\def\ceil#1{\left\lceil #1\right\rceil }
\global\long\def\C{C}
\global\long\def\X{\mathcal{X}}
\global\long\def\Xk#1{\X^{\left(#1\right)}}
\global\long\def\H{F_n}
\global\long\def\Hk#1{\H^{\left(#1\right)}}
\global\long\def\x{x}
\global\long\def\y{y}
\global\long\def\f{f}
\global\long\def\S{\mathcal{S}}
\global\long\def\g{a_{n}}
\global\long\def\gg{\hat{a}_{n}}
\global\long\def\circle{\mathcal{O}}

We now want to calculate $\E\Y^{2}$. 
As a first step we transform this problem into one of evaluating
the coefficient of a certain generating function.

\begin{lemma}
Let $\k\geq 3$ be fixed and define
\[ N(n,\k) = \begin{cases} n & \text{ if $\k\geq 4$,}\\
            n/2 + 2 & \text{ if $\k=3$}.
\end{cases}\]
Then
\begin{align*}
& \left|\P\right|\,\E\Y^{2} \\
&= \frac{n!\, \left((\k-2)n\right)!\, \k^n}{2^{(\k/2-1)n+2}}\,
   \sum_{b=1}^{N(n,\k)}\, \frac{2^b}
  {b!\,\left(\left(\k/2-1\right)n-b+2\right)!}\,
  \left[z^{n}\right]\,\left(\sum_{\i=1}^{\infty}
 \binom{(\k-1)\i}{\i}\, z^{\i}\right)^{b}.
\end{align*}
\label{lem:|P|EY2}
\end{lemma}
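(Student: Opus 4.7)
The plan is to count $|\P|\,\E\Y^2 = \sum_{P \in \P} \tau(G(P))^2$ as the number of triples $(P, E_1, E_2)$ where $P \in \P$ is a pairing and $E_1, E_2 \subseteq P$ are each a set of $n-1$ prevertex-pairs projecting to a spanning tree on $\{1,\ldots,n\}$. Swapping the order of summation, for each fixed $(E_1, E_2)$ the number of extensions to a full pairing is $\nP(\k n - 2|E_1 \cup E_2|)$. Introducing $s = |E_1 \cap E_2|$ and $b = n - s$, one computes $|E_1 \cup E_2| = 2(n-1) - s$, so the extension count is $\nP((\k-2)n - 2b + 4) = \frac{((\k-2)n-2b+4)!}{((\k/2-1)n-b+2)!\,2^{(\k/2-1)n-b+2}}$, which reproduces all factors in the lemma except for the numerator $n!\,((\k-2)n)!\,\k^n$ and the coefficient $[z^n]\Phi(z)^b$. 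The range $1 \leq b \leq N(n,\k)$ is forced by $s \in \{0,\ldots,n-1\}$ (giving $b \in \{1,\ldots,n\}$) together with non-negativity $(\k-2)n-2b+4 \geq 0$ of the remaining prevertex count, which is binding only when $\k=3$.

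Next I would count the number of pairs $(E_1, E_2)$ with $|E_1 \cap E_2| = s$ by writing $T_1, T_2$ for the projected spanning trees and $S$ for the subforest of $T_1 \cap T_2$ whose edges correspond to the shared prevertex-pairs (so $|S|=s$). I parametrize $(E_1, E_2)$ by the triple $(T_1, T_2, S)$ together with, at each bucket $v$, a selection-and-ordering of the $d_1(v)+d_2(v)-s(v)$ prevertices that realize the edges of $T_1 \cup T_2$ incident to $v$, where $s(v) := \deg_S(v)$. Each doubled common edge at $v$ uses two prevertices and each shared common edge uses one, so each bucket contributes a falling factorial $(\k)_{d_1(v)+d_2(v)-s(v)}$. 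The crux of the proof is then to establish the identity
\[
\sum_{(T_1,T_2,S)\,:\,|S|=s}\ \prod_{v=1}^n (\k)_{d_1(v)+d_2(v)-s(v)}
\;=\; \frac{n!\,((\k-2)n)!\,\k^n}{b!\,((\k-2)n-2b+4)!}\,[z^n]\,\Phi(z)^b,
\]
where $\Phi(z) = \sum_{\i\geq 1}\binom{(\k-1)\i}{\i}\,z^{\i}$.

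To establish this identity, I would contract each connected component of the forest $S$ to a single super-vertex, producing $b = n - s$ super-vertices with component sizes $\i_1,\ldots,\i_b$ summing to $n$. Applying the refined tree-counting identity~(\ref{eq:moon}) at the super-vertex level (to count $T_1/S$ and $T_2/S$ as spanning trees of the contracted $b$-vertex multigraph with prescribed degrees) and factoring the sum by components, the multinomial $n!/\prod_t \i_t!$ accounts for the vertex-partition into components and $1/b!$ removes the ordering of the components. After summing over the internal subtree $S|_{V_t}$, the degree profiles of $T_1, T_2$ inside each component, and the falling-factorial prevertex-selection, the per-component weight reduces to $\binom{(\k-1)\i_t}{\i_t}$, producing the $[z^n]\Phi(z)^b$ factor after collecting over all ordered compositions of $n$ into $b$ positive parts. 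This last per-component simplification is the main technical obstacle; I expect it to follow by a generating-function manipulation involving $\k x(1+x)^{\k-1}$ (the same series as in Section~\ref{sec:EY}) together with a Lagrange-inversion identity, and it is where the bulk of the combinatorial calculation lies.
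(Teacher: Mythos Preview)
Your decomposition is essentially the paper's: enumerate by the shared subforest $S$, contract its $b$ components to super-buckets, and count the pair of contracted spanning trees $T_1/S,\,T_2/S$ together with the prevertex assignments. Your target identity and the per-bucket weight $(\k)_{d_1(v)+d_2(v)-s(v)}$ are correct, and the range of $b$ is handled exactly as the paper does.

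The one place where your sketch is genuinely incomplete is the step you flag as the ``main technical obstacle'': why the sum over the two contracted trees (equivalently over their super-bucket degree sequences) factors into $\prod_t\binom{(\k-1)j_t}{j_t}$. This is \emph{not} a Lagrange-inversion step. In the paper's organisation, after contracting one writes $\eta^1_t=\delta^1_t-1$, $\eta^2_t=\delta^2_t-1$ for the (shifted) degrees of $T_1/S$, $T_2/S$ at super-bucket $t$, and introduces a third variable $\eta^3_t=(\k-2)\nu_t-\eta^1_t-\eta^2_t$ recording the leftover prevertices in that super-bucket. The entire sum over degree profiles then becomes
\[
\sum_{(\eta^1,\eta^2,\eta^3)}\binom{b-2}{\eta^1_1,\dots,\eta^1_b}\binom{b-2}{\eta^2_1,\dots,\eta^2_b}\binom{(\k-2)n-2(b-2)}{\eta^3_1,\dots,\eta^3_b}
= \binom{(\k-2)n}{(\k-2)\nu_1,\dots,(\k-2)\nu_b},
\]
which is just the trivial coefficient identity
$\bigl[z_1^{a_1}\cdots z_b^{a_b}\bigr]\bigl(\sum z_t\bigr)^{m_1}\bigl(\sum z_t\bigr)^{m_2}\bigl(\sum z_t\bigr)^{m_3}
=\bigl[z_1^{a_1}\cdots z_b^{a_b}\bigr]\bigl(\sum z_t\bigr)^{m_1+m_2+m_3}$.
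After this collapse, combining with the count of internal trees and their pairings from~(\ref{useful}) yields $\binom{(\k-1)\nu_t}{\nu_t}$ per component by a two-line binomial simplification. So your plan works, but the missing idea is this multinomial-collapse trick rather than any inversion argument.
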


\begin{proof}
We write 
\[
\left|\P\right|\,\E\Y^{2} = \sum_{P\in\P}\, \sum_{T_{1}\in\T}\,
  \sum_{T_{2}\in\T}\, M_{\left(T_{1},T_{2}\right),P},
\]
where $M_{(T_{1},T_{2}),P}$ is the number of ways to embed the ordered
pair of trees $\left(T_{1},T_{2}\right)$ into the multigraph $G\left(P\right)$.
We will estimate this sum by choosing some $T_{1},T_{2}\in\T$ and
counting the ways to pair up their edges, then counting the ways to
complete the pairing. We break up this process in a similar way to
Section \ref{sec:E_YX}:

\begin{enumerate}
\item Choose $b\in\{1,\ldots, n\}$, which will be the number of 
connected components in the 
intersection of the embeddings of $T_{1}$ and $T_{2}$.
(As we will see later, when $\k=3$ we must restrict to $b\in \{1,\ldots,
n/2 + 2\}$.)
\item \label{step:EY2-1}
Choose a partition $(\nu_1,\ldots, \nu_b)$ of $n$ into positive parts.
That is, $\nu_{\i}$ is a positive integer for $\i=1,\ldots, b$ and
$\sum_{\i=1}^b \nu_{\i} = n$.  Here $\nu_{\i}$ will be the number of
vertices in the $\i$th connected component of the interesection.
(We should divide by
$b!$ to account for our assumption that the connected components
are labelled).
\item \label{step:EY2-2}
Choose a partition of the $n$ vertices into $b$ groups, where the
size of the $\i$th group is $\nu_{\i}$.
\item \label{step:EY2-3} In each group, choose a spanning tree on that
group and choose a partial pairing that projects to that tree.
This specifies a component of the intersection.
\end{enumerate}

Now, collapse the buckets in each group into a single super-bucket,
giving exactly $b$ super-buckets.  The
$\i$th super-bucket has $\k\nu_{\i}-2\left(\nu_{\i}-1\right)$
unpaired prevertices. We now want to pair up two pair-disjoint spanning
trees $T_{1}',T_{2}'$ in the collapsed pairing model. These will
extend to $T_{1}$ and $T_{2}$ using the intersection subtrees chosen
in Step \ref{step:EY2-3}.\vspace{-5pt}

\begin{enumerate}
\setcounter{enumi}{4}
\item 
\label{step:EY2-4}
For $\i=1,\ldots, b$, choose $\r_{\i}$ and
$\w_{\i}$, the degree of vertex $\i$ in $T_{1}'$ and $T_{2}'$
respectively, in such a way that 
$\r_{\i}+\w_{\i}\le\k\nu_{\i}-2\left(\nu_{\i}-1\right)$.
These must also satisfy
\[ \sum_{\i=1}^b \r_{\i} = \sum_{\i=1}^b \w_{\i} = 2(b-1),\]
as they are the degree sequence of a spanning tree on $b$ vertices.
\item \label{step:EY2-5}
Choose two trees $T_{1}',T_{2}'$ on the $b$ vertices
that are consistent with the degree sequences chosen in Step \ref{step:EY2-5}.
\item \label{step:EY2-6}
Pair up these two trees in a pair-disjoint way.
\item \label{step:EY2-7}
Pair all remaining prevertices to complete a $\k$-regular pairing.
\end{enumerate}

Given $\nu$, the number of ways to complete Step \ref{step:EY2-2}
is 
\[
\s_{\ref{step:EY2-2}}= \binom{n}{\nu_{1},\ldots,\nu_{b}}.
\]
By (\ref{useful}), 
the number of ways to complete Step \ref{step:EY2-3} is 
\[
\s_{\ref{step:EY2-3}}=
\k^{n}\, \prod_{\i=1}^{b}\, \left(\nu_{\i}-2\right)!\,
\binom{(\k-1)\nu_{\i}}{\nu_{\i}-2}.
\]
The number of ways to complete Step \ref{step:EY2-5} is 
\[
\s_{\ref{step:EY2-5}}= \binom{b-2}{\r_{1}-1,\ldots,\r_{b}-1}\,
  \binom{b-2}{\w_{1}-1,\ldots,\w_{b}-1},
\]
 by (\ref{eq:moon}), and the number of ways to complete Step \ref{step:EY2-6}
is 
\[
\s_{\ref{step:EY2-6}}=\prod_{\i=1}^{b}\falling{(\k-2)\nu_{\i}+2}{\r_{\i}+\w_{\i}}.
\]
Finally, for Step \ref{step:EY2-7} there are 
\[
\sum_{\i=1}^{b}\left(\k\nu_{\i}-2\left(\nu_{\i}-1\right)-\r_{\i}-\w_{\i}\right) 
  =  \left(\k-2\right)n-2\left(b-2\right)
\]
prevertices remaining, so the number of ways to complete Step \ref{step:EY2-7}
is 
\[
\s_{\ref{step:EY2-7}}=\nP\left(\left(\k-2\right)n-2\left(b-2\right)\right).
\]
For this construction to make sense, the quantity $(\k-2)n-2(b-2)$ must be 
nonnegative. This is certainly true when $\k\geq 4$, but when $\k=3$ this
imposes the constraint that $b\leq n/2+2$.  (This explains the definition
of $N(n,\k)$ in the lemma statement.)

It will be convenient to work with the nonnegative variables
\begin{equation}
\label{translate} \rr_{\i}=\r_{\i}-1, \quad \ww_{\i}=\w_{\i}-1 \quad \text{ and }
 \quad \q_{\i}=\left(\k-2\right)\nu_{\i}-\rr_{\i}-\ww_{\i}
\end{equation}
defined for $\i=1,\ldots, b$.
Let 
\[ \S_{\ref{step:EY2-1}}(b) = \{ \nu \in \{ 1,\ldots, n\}^b : 
       \sum_{\i=1}^b \nu_{\i} = n \}
\]
be the set of possible sequences $\nu$
from Step \ref{step:EY2-1} and let 
\begin{align*} \S_{\ref{step:EY2-4}}(\nu) 
 = \left\{ \left(\rr,\ww,\q\right)\in\left(\mathbb{N}^{b}\right)^{3}: \right. &
   \left.
  \quad\rr_{\i}+\ww_{\i}+\q_{\i}=\left(\k-2\right)\nu_{\i} \, \mbox{ for }\,
  \i=1,\ldots, b,\right.\\
 &  \left. {} \quad 
   \sum_{j=1}^b \rr_{\i} = \sum_{j=1}^b \ww_{\i} = b-2\right\} 
\end{align*}
be the set of sequences arising from Step~\ref{step:EY2-4} using (\ref{translate}).

Combining all of the above gives
\begin{align*}
\left|\P\right|\,\E\Y^{2} & = \sum_{b=1}^{N(n,\k)}\, 
 \frac{1}{b!}\, \sum_{\nu\in\S_{\ref{step:EY2-1}}(b)}\, 
  \s_{\ref{step:EY2-2}}\s_{\ref{step:EY2-3}}\,
  \sum_{\left(\rr,\ww,\q\right)\in\S_{\ref{step:EY2-4}}(\nu)}\,
  \s_{\ref{step:EY2-5}}\s_{\ref{step:EY2-6}}\s_{\ref{step:EY2-7}}\\
 & = \sum_{b=1}^{N(n,\k)}\, \frac{n!\, \k^{n}}
  {b!\,\left(\left(\k/2-1\right)n-b+2\right)!\,2^{\left(\k/2-1\right)n-b+2}}\, 
\sum_{\nu\in\S_{\ref{step:EY2-1}}(b)}\,
 \prod_{\i=1}^{b}\, \frac{\left(\left(\k-1\right)\nu_{\i}\right)!}{\nu_{\i}!}\\
 &  \quad\times\sum_{\left(\rr,\ww,\q\right)\in\S_{\ref{step:EY2-4}}(\nu)}
 \, \binom{b-2}{\rr_{1},\ldots,\rr_{b}}\, \binom{b-2}{\ww_{1},\ldots,\ww_{b}}
  \,\binom{(\k-2)n-2(b-2)}{\q_{1},\dots,\q_{b}}.
\end{align*}
Now
\begin{align*}
 &  \sum_{\left(\rr,\ww,\q\right)\in\S_{\ref{step:EY2-4}}(\nu)}\,
  \binom{b-2}{\rr_{1},\ldots,\rr_{b}}\,
  \binom{b-2}{\ww_{1},\ldots,\ww_{b}}\,
  \binom{(\k-2)n-2(b-2)}{\q_{1},\ldots,\q_{b}}\\
 & = \sum_{\left(\rr,\ww,\q\right)\in\S_{\ref{step:EY2-4}}(\nu)}\,
  \left[z_{1}^{\rr_{1}}\dots z_{b}^{\rr_{b}}\right]\,
   \left(\sum_{\i=1}^{b}z_{\i}\right)^{b-2}\\
 &  \quad\times\left[z_{1}^{\ww_{1}}\dots z_{b}^{\ww_{b}}\right]\left(\sum_{\i=1}^{b}z_{\i}\right)^{b-2}\left[z_{1}^{\q_{1}}\dots z_{b}^{\q_{b}}\right]\left(\sum_{\i=1}^{b}z_{\i}\right)^{\left(\k-2\right)n-2\left(b-2\right)}\\
 & = \left[ z_1^{(\k-2)\nu_1}\, z_2^{(\k-2)\nu_2} \cdots z_b^{(\k-2)\nu_b}
  \right]\, \left(\sum_{\i=1}^{b}z_{\i}\right)^{\left(\k-2\right)n}\\
 & = \binom{(\k-2)n}{(\k-2)\nu_{1},\ldots,(\k-2)\nu_{b}}.
\end{align*}
It follows that
\begin{align*}
& \left|\P\right|\,\E\Y^{2} \\
  &= \frac{n!\, \left((\k-2)n\right)!\, \k^n}{2^{(\k/2-1)n+2}}\,
  \sum_{b=1}^{N(n,\k)} \, \frac{2^b}{b!\,\left(\left(\k/2-1\right)n-b+2\right)!}
 \, \sum_{\nu\in\S_{\ref{step:EY2-1}}(b)}\,
   \prod_{\i=1}^{b}\, \binom{(\k-1)\nu_{\i}}{\nu_{\i}}. 
\end{align*}
This is equal to the expression in the lemma statement,
by definition of $\S_{\ref{step:EY2-1}}(b)$. 
\end{proof}

We now seek to evaluate
\[ [z^n] \left( \sum_{\i=1}^\infty \binom{(\k-1)\i}{\i} z^{\i}\right)^b.\]
By Stirling's approximation and the ratio test, the radius of
convergence of the series $\sum_{\i=1}^{\infty}\, \binom{(\k-1)\i}{\i}\, z^{\i}$
equals $\frac{\left(\k-2\right)^{\k-2}}{\left(\k-1\right)^{\k-1}}$.
Hence, 
\[
\f\left(z\right):=\sum_{\i=1}^{\infty}\,
 \binom{(\k-1)\i}{\i}\, \left(\frac{\left(\k-2\right)^{\k-2}}{\left(\k-1\right)^{\k-1}}\right)^{\i}z^{\i}
\]
 is analytic in the disk $\left\{ z:\left|z\right|<1\right\} $. Define
$\beta=b/n$ and let $r_{\beta}\in\left(0,1\right)$ be fixed
for each $\beta$ (we will determine this later). Then, with the contour
$\Gamma:\left[-\pi,\pi\right]\to\mathbb{C}$ defined by 
$\theta\mapsto r_{\beta}e^{i\theta}$, we have
\begin{align}
\left[z^{n}\right]\f\left(z\right)^{b} & = 
  \frac{1}{2\pi i}\, \int_{\Gamma}\frac{\f\left(z\right)^{b}}{z^{n+1}}\,
    \mathrm{d}z\nonumber \\
 & = \frac{1}{2\pi}\int_{-\pi}^{\pi}\left(\frac{\f\left(r_{\beta}e^{i\theta}\right)^{\beta}}{r_{\beta}e^{i\theta}}\right)^{n}\mathrm{d}\theta,\label{eq:contour-integral}
\end{align}
by Cauchy's coefficient formula.
Let
\[ \X_{n}=\left\{ \dfrac{1}{n},\dfrac{2}{n},\dots,\dfrac{N(n,\k)}{n}\right\} 
   \times\left[-\pi,\pi\right]
\]
be the sample space for pairs $\left(\beta,\theta\right)$, 
and define
$\g:\X_{n}\to\mathbb{C}$ by
\begin{equation}
\label{andef}
\g\left(\beta,\theta\right)=\frac{2^{\beta n}}{\left(\beta n\right)!\,\left(\left(\k/2-1\right)n-\beta n+2\right)!}\left(\frac{\f\left(r_{\beta}e^{i\theta}\right)^{\beta}}{r_{\beta}e^{i\theta}}\right)^{n}.
\end{equation}
Finally, let
\begin{equation}
\label{eq:E}
\H=\sum_{b=1}^{N(n,\k)}\int_{-\pi}^{\pi}\g\left(b/n,\theta\right)\,\mathrm{d}\theta.
\end{equation}
Then, by Lemma~\ref{lem:|P|EY2} and (\ref{eq:contour-integral}), 
\begin{equation}
\left|\P\right|\,\E\Y^{2} = 
\frac
{\left(\k-1\right)^{n\left(\k-1\right)}n!\,\left(\left(\k-2\right)n\right)!\,\k^{n}}
{2\pi\left(\k-2\right)^{n\left(\k-2\right)}2^{\left(\k/2-1\right)n+2}}\, \H .
\label{eq:E2}
\end{equation}

We now apply the saddle point method to estimate the sum in
(\ref{eq:E}) in the case that $\k=3$. 
Our proof is adapted from that of~\cite[Theorem 2.3]{GJR}.

When $\k=3$ the function $f$ satisfies 
\[
\f\left(z\right)=\sum_{\i=1}^{\infty}\, \binom{2\i}{\i}\, 
  \left(\frac{z}{4}\right)^{\i}=\left(1-z\right)^{-1/2}-1.
\]
We note for later that if $\theta\in [-\pi,\pi]$ is nonzero then
\begin{equation}
 \left| f\left( r_\beta e^{i\theta}\right)\right| = 
   \left| \, \sum_{j=1}^\infty 
     \binom{2j}{j}\, \left(\frac{r_\beta}4\right)^j
 e^{ij\theta}\, \right|
   < 
\sum_{j=1}^\infty 
     \binom{2j}{j}\, \left(\frac{r_\beta}4\right)^j
 = |f(r_\beta)|,
\label{lem:technical}
\end{equation} 
using the triangle inequality.
Hence for each $\beta$ the function $\theta \mapsto |f( r_\beta e^{i\theta})|$
on $[-\pi,\pi]$ is uniquely maximised at $\theta =0$.

Define $\X=\left(0,1\right]\times\left[-\pi,\pi\right]$ 
and let $\X^{*}\subset\X$ be a set (to be determined) such that for $\left(\beta,\theta\right)\in\X^{*}$,
both $\beta$ and $\k/2-1-\beta=1/2-\beta$ are bounded below by some
positive constant. 
Then Stirling's approximation gives, for $\beta\in\X^*\cap\X_n$,
\begin{align}
& \g\left(\beta,\theta\right) \nonumber\\
 & \sim \frac{e^{n/2}}{2\pi\, n^{n/2+3}\, \sqrt{\beta\left(1/2-\beta\right)}\, \left(1/2-\beta\right)^{2}}\left(\frac{\left(2\f\left(r_{\beta}e^{i\theta}\right)\right)^{\beta}}{r_{\beta}e^{i\theta}\beta^{\beta}\left(1/2-\beta\right)^{\left(1/2-\beta\right)}}\right)^{n}.\label{eq:g-asymptotic-working}
\end{align}
Next, define the half-spaces $\X^{1/2}=\left(0,1/2\right]\times\left[-\pi,\pi\right]$
and $\bar{\X}^{1/2}=\left[0,1/2\right]\times\left[-\pi,\pi\right]$.
Define the real-valued sequence $\left(c_{n}\right)_{n\in\mathbb{N}}$
and the functions $\psi:\bar{\X}^{1/2}\to\mathbb{R}$ and $\phi:\X^{1/2}\to\mathbb{C}$
by 
\begin{align*}
c_{n} & = \frac{e^{n/2}}{2\pi n^{n/2+3}},\\
\psi\left(\beta,\theta\right) & = \beta^{-1/2}\left(1/2-\beta\right)^{-5/2},\\
\phi\left(\beta,\theta\right) & = \beta\log\left(2\f\left(r_{\beta}e^{i\theta}\right)\right)-\log r_{\beta}-i\theta-\beta\log\beta-\left(1/2-\beta\right)\log\left(1/2-\beta\right),
\end{align*}
so that we have 
\begin{equation}
\g\left(\beta,\theta\right)\sim c_{n}\, \psi\left(\beta,\theta\right)\,
 e^{n\phi\left(\beta,\theta\right)}\label{eq:g-asymptotic-statement}
\end{equation}
uniformly for $\left(\beta,\theta\right)\in\X_{n}\cap\X^{*}$.

Let $D$ denote the differential operator 
\[ \left(D\phi\left(x\right)\right)_{\i}=
   \frac{\partial\phi\left(x\right)}{\partial x_{\i}}.\]
We seek a stationary point of $\phi$.  The condition
$\frac{\partial\phi\left(\beta,0\right)}{\partial\theta}=0$
is equivalent to the condition $\beta r_\beta f'(r_\beta) = f(r_\beta)$.
Solving for $r_\beta$ gives
\[
r_{\beta}=\frac{1}{8}\left(8-4\beta-\beta^{2}\pm\sqrt{\beta^{3}\left(8+\beta\right)}\right).
\]
We choose
$r_{\beta}=\frac{1}{8}\left(8-4\beta-\beta^{2}-
   \sqrt{\beta^{3}\left(8+\beta\right)}\right)\in\left(0,1\right)$,
which ensures that
$\frac{\partial\phi\left(\beta,0\right)}{\partial\theta}=0$.
Next, we calculate that with this choice of $r_\beta$,
\[ \frac{\partial\phi}{\partial \beta}(\beta,0) = 
  \log\left(\frac{\left(4-\beta-\sqrt{\beta(8+\beta)}\right)(1-2\beta)}
                 {\beta\left(\beta + \sqrt{\beta(8+\beta)}\right)}\right).
\]
Setting this equal to 0 and solving for $\beta$ gives the equation
$(3\beta-1)(\beta^2-4\beta+2)=0$.
The only solution with $\beta\in\left(0,\frac12\right]$ is $\beta=\frac13$ so we choose
$\x^{*}=\left(\frac{1}{3},0\right)$ and check that 
$D\phi\left(\x^{*}\right)=0$.

Note that $\phi\left(\x^{*}\right)=\log\left(4\sqrt{\frac{2}{3}}\right)$,
and
\[
H=-\begin{pmatrix}\frac{63}{5} & 0\\
0 & \frac{5}{2}
\end{pmatrix}
\]
is the Hessian matrix of $\phi$ at $\x^{*}$. Define $\C_{1}=5/8$,
so that $-4\C_{1}$ is the largest eigenvalue of $H$.

Now, define $\hat{\phi}$ by $\hat{\phi}\left(\x\right)=\phi\left(\x\right)-\phi\left(\x^{*}\right)$,
and define $\gg:\X_{n}\to\mathbb{C}$ by 
\[
\gg\left(\x\right)=
 c_n^{-1}\, e^{-n\phi\left(\x^{*}\right)}\, 
  \g\left(\x\right).
\]
With a Taylor expansion about $\x^{*}$, for $\x\in\X^{1/2}$ we have
\begin{equation}
\hat{\phi}\left(\x\right) = \frac{1}{2}\left(\x-\x^{*}\right)^{T}H\left(\x-\x^{*}\right)+h\left(\x\right)\left|\x-\x^{*}\right|^{2},\label{eq:taylor}
\end{equation}
where $h\left(\x\right)$ is complex and $h\left(\x\right)\to0$ as
$\x\to\x^{*}$. For all $v\in\mathbb{R}^{2}$ we have $v^{T}Hv\le-2\C_{1}\left|v\right|^{2}$,
so we can choose $\xi<\frac{1}{6}$ such that $\Re\hat{\phi}\left(\x\right)\le-\C_{1}\left|\x-\x^{*}\right|^{2}$
for $\left|\x-\x^{*}\right|<\xi$. Define $\X^{*}=\left\{ \x\in\X^{1/2}:\left|\x-\x^{*}\right|<\xi\right\} $,
satisfying the requirement for (\ref{eq:g-asymptotic-working}).

Next, define the sets 
\begin{align*}
\Xk 1 & = \left\{ \x\in\X^{*}:\left|\x-\x^{*}\right|<n^{-1/3}\right\} ,\\
\Xk 2 & = \X^{*}\setminus \Xk 1,\\
\Xk 3 & = \X^{1/2}\setminus \X^{*},\\
\Xk 4 & = \X\setminus \X^{1/2},
\end{align*}
so that with
\[
\Hk{\i}=\sum_{b=1}^{n/2+2}\int_{-\pi}^{\pi}\gg\left(b/n,\theta\right)\,
  \one_{\Xk{\i}}(b/n,\theta)\, \mathrm{d}\theta
\]
we have 
\begin{equation}
\label{split}
 c_n^{-1}\, e^{-n\phi\left(\x^{*}\right)} \H=\Hk 1+\Hk 2+\Hk 3+\Hk 4.
\end{equation}

\begin{lemma}
With notation as above, we have
\[ \Hk 1 + \Hk 2 + \Hk 3 + \Hk 4 \sim \Hk 1
   \sim \frac{144\pi}{\sqrt{7}}.\]
\label{lem:dominates}
\end{lemma}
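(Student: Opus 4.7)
The plan is a two-dimensional saddle-point analysis of $F_n$: show that the main contribution comes from a tiny neighbourhood of the stationary point $x^* = (1/3,0)$ already identified, while the other three pieces in the splitting (\ref{split}) are negligible.

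First I would evaluate $F^{(1)}$, the contribution from the region $X^{(1)}$ where $|x - x^*| < n^{-1/3}$. In this region (\ref{eq:taylor}) gives $\hat{\phi}(x) = \tfrac{1}{2}(x-x^*)^T H(x-x^*) + o(|x-x^*|^2)$ and $\psi(x) \to \psi(x^*)$. The sum over $b$ has step size $1/n$ in $\beta$, so approximating by an integral and then substituting $y = \sqrt{n}(x-x^*)$ converts the truncated region to $|y| < n^{1/6}$, which extends to $\mathbb{R}^2$ in the limit. Using $\int_{\mathbb{R}^2} e^{\frac{1}{2} y^T H y}\,dy = 2\pi / \sqrt{\det(-H)}$ gives
\[
F^{(1)} \sim \frac{2\pi \, \psi(x^*)}{\sqrt{\det(-H)}}.
\]
Plugging in $\psi(1/3, 0) = (1/3)^{-1/2}(1/6)^{-5/2} = 108\sqrt{2}$ and $\det(-H) = (63/5)(5/2) = 63/2$ yields exactly $144\pi/\sqrt{7}$, the desired value.

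Next I would show that each of $F^{(2)}$, $F^{(3)}$, $F^{(4)}$ is $o(1)$. For $F^{(2)}$, the choice of $\xi$ gives $\operatorname{Re}\hat{\phi}(x) \leq -C_1|x-x^*|^2 \leq -C_1 n^{-2/3}$ throughout $X^{(2)}$, so $|\hat{a}_n| = O(e^{-C_1 n^{1/3}})$ uniformly; combining with the bounded measure of $X^{(2)}$ and the $O(n)$ choices of $b$ shows $F^{(2)}$ is exponentially small. For $F^{(3)}$, the key is a uniform bound $\operatorname{Re}\hat{\phi} \leq -c < 0$ on $X^{1/2} \setminus X^*$. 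On any compact slice $\beta \in [\epsilon, 1/2-\epsilon]$, (\ref{lem:technical}) gives $\operatorname{Re}\phi(\beta,\theta) < \phi(\beta,0)$ for $\theta \neq 0$, and along $\theta = 0$ the explicit formula for $\partial\phi(\beta,0)/\partial\beta$ shows $\beta = 1/3$ is the unique maximum on $(0,1/2]$; a compactness argument on $X^{1/2} \setminus X^*$ then yields the bound, with a separate direct check for the boundary regimes $\beta \to 0$ and $\beta \to 1/2$ using the prefactor $\psi$. Finally, $F^{(4)}$ picks up only the two values $b = n/2 + 1$ and $b = n/2 + 2$ (since $N(n,3) = n/2 + 2$), for which the Stirling approximation (\ref{eq:g-asymptotic-statement}) does not apply; one returns directly to (\ref{andef}) and verifies by inspection that, after normalisation by $c_n e^{n\phi(x^*)}$, these two exceptional terms are $o(1)$.

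The main obstacle is the uniform bound on $\operatorname{Re}\hat{\phi}$ over $X^{(3)}$. Because $\phi$ depends on $\beta$ through the implicit saddle radius $r_\beta$, one cannot simply write down the maximum in closed form; instead one needs a careful argument that rules out any secondary maximum of $|f(r_\beta e^{i\theta})|$ on the circle of radius $r_\beta$ and that handles the two degenerate regimes $\beta \to 0$ and $\beta \to 1/2$, where the prefactor $\psi(\beta, \theta) = \beta^{-1/2}(1/2-\beta)^{-5/2}$ blows up and must be controlled by the decay of $e^{n\hat{\phi}}$.
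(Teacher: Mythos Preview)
Your proposal is correct and follows essentially the same saddle-point strategy as the paper. One technical refinement worth noting: for the $\psi$-blowup at the boundaries of $X^{(3)}$ (which you correctly flag as the main obstacle), the paper avoids any separate boundary analysis by applying the crude Stirling bound $k!\ge\sqrt{k\vee 1}\,(k/e)^k$, valid for all $k\ge 0$, directly in (\ref{andef}); this absorbs $\psi$ into an $e^{o(n)}$ factor uniformly on $X^{1/2}\cap X_n$ and reduces the whole of $F^{(3)}$ to the single compactness bound $\sup_{X^{(3)}}\Re\hat\phi<0$.
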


\begin{proof}
Note that $\psi$ is a continuous function defined on a compact set.
So, $\psi$ is absolutely bounded on its domain, by $\C_{2}$ say.
By (\ref{eq:g-asymptotic-statement}), it follows that $\left|\gg\left(\x\right)\right|=O\left(e^{n\Re\hat{\phi}\left(\x\right)}\right)$
uniformly for $\x\in\X^{*}$. For $\x\in\Xk 2$ we have 
\[
n\Re\hat{\phi}\left(\x\right)\le-n\C_{1}\left|\x-\x^{*}\right|^{2}\le-\C_{1}n^{1/3}\to-\infty
\]
and consequently
\begin{equation}
\left|\Hk 2\right|  \, \leq \, 
  \sum_{b=1}^{n} \int_{-\pi}^{\pi}\, \left|\gg\left(b/n,\theta\right)\right|\,
  \one_{\Xk 2}(b/n,\theta)\, \mathrm{d}\theta \,
  = \, O\left(ne^{-\C_{1}n^{1/3}}\right) = o(1).
\label{F2bound}
\end{equation}

Now (\ref{lem:technical}) implies that
for each $\beta$, $\Re\hat{\phi}\left(\beta,\theta\right)$
is uniquely maximized when $\theta=0$. 
Also
$\frac{\partial\Re\hat{\phi}}{\partial\beta}\left(\beta,0\right)=\frac{\partial\hat{\phi}}{\partial\beta}\left(\beta,0\right)=0$ only for $\left(\beta,0\right)=\x^*$, since $\hat{\phi}$ is real along the line $\theta=0$.
Checking the values of $\Re\hat{\phi}(\beta,0)$ in the limit as $\beta\to 0$ 
and $\beta\to \nfrac{1}{2}$, 
it follows that $\Re\hat{\phi}$ attains a unique
maximum on $\X^{1/2}$ at $\x^{*}$. 
Let $-\C_{3}<0$ be the maximum value of $\Re\hat{\phi}$ on $\Xk 3$.

Let $u\lor w=\max\left\{ u,w\right\} $ for real numbers $u,w$. 
We now redo the calculations of (\ref{eq:g-asymptotic-working}) using an
alternate form of Stirling's inequality which holds for all $k\geq 0$,
namely $\sqrt{k\lor1}\left(\frac{k}{e}\right)^{k}\le k!$.
For $\left(\beta,\theta\right) \in \X^{1/2}\cap \X_n$,
\begin{align*}
\left|\gg\left(\beta,\theta\right)\right| & \le 
\frac{ne^{2}}{\sqrt{\left(\beta n\lor 1\right)
  \left(\left(n/2-\beta n+2\right)\lor 1\right)}
  \left(1/2-\beta\right)^{2}}\, e^{n\Re\hat{\phi}\left(\x\right)}\\
 & = e^{n\Re\hat{\phi}\left(\x\right)+o\left(n\right)}.
\end{align*}
It follows that
\begin{equation}
\label{F3bound}
\left|\Hk 3\right|=O\left(ne^{-\C_{3}n/2}\right) = o(1).
\end{equation}

Next suppose that $\left(\beta,\theta\right)\in\Hk 4\cap \X_n$.
Then we have $\frac{1}{2}<\beta\le\frac{1}{2}+o(1)$ and $\left(n/2-b+2\right)!=1$. 
By the alternate form of Stirling's inequality and (\ref{lem:technical}),
\begin{align*}
\left|\g\left(\beta,\theta\right)\right| & \le \frac{n^{3}e^{2}}{\sqrt{\left(\beta n\lor 1\right)}}\, c_{n}\left|\frac{\left(2\f\left(r_{\beta}e^{i\theta}\right)\right)^{\beta}}{r_{\beta}e^{i\theta}\beta^{\beta}}\right|^{n}\\
& \le e^{o\left(n\right)}c_{n}\left(\frac{\left(2\f\left(r_{1/2}\right)\right)^{1/2}}{r_{1/2}\left(\frac12\right)^{1/2}}\right)^{n}.
\end{align*}
By direct computation,
\[
\log\frac{\left(2\f\left(r_{1/2}\right)\right)^{1/2}}{r_{1/2}\left(\frac12\right)^{1/2}}=\phi(x^*)+\C_4
\]
for some $\C_4>0$. It follows that
\begin{equation}
\label{F4bound}
\left|\Hk 4\right|=O\left(ne^{-\C_{4}n/2}\right) = o(1).
\end{equation}
It remains to consider $\Hk 1$. Define $\ceil{\left(\beta,\theta\right)}=\left(\frac{\ceil{\beta n}}{n},\theta\right)$,
so that $\H=n\int_{\X}\gg\left(\ceil{\x}\right)\mathrm{d}\x$. For
any $y\in\mathbb{R}^{2}$, define $\x_{\y}=\x^{*}+\y/\sqrt{n}$ and
$B_{n}=\left\{ \y:\ceil{\x_{\y}}\in\Xk 1\right\} $, so that we can
make the change of variables
\[
\Hk 1=\int_{B_{n}}\gg\left(\ceil{\x_{\y}}\right)\mathrm{d}\y.
\]
Note that 
\[
\left|\y/\sqrt{n}\right|=\left|\x_{\y}-\x^{*}\right|\le\left|\x_{\y}-\ceil{\x_{\y}}\right|+\left|\ceil{\x_{\y}}-\x^{*}\right|=O\left(n^{-1/3}+n^{-1}\right)=O\left(n^{-1/3}\right)
\]
for $\y\in B_{n}$, so that $B_{n}$ is approximately a ball of radius
$O\left(n^{1/6}\right)$.

Next, a first-order Taylor expansion of $D\phi$ about $\x^{*}$ gives
\[
\left|D\phi\left(\x_{\y}\right)\right|=O\left(\left|\y/\sqrt{n}\right|\right)=O\left(n^{-1/3}\right).
\]
Another first-order Taylor expansion of $\phi$ about $\x_{\y}$ gives
\[
\phi\left(\ceil{\x_{\y}}\right)-\phi\left(\x_{\y}\right)=O\left(\left|D\phi\left(\x_{\y}\right)\right|\left|\ceil{\x_{\y}}-\x_{\y}\right|\right)=O\left(n^{-4/3}\right),
\]
so that 
\begin{equation}
e^{n\phi\left(\x_{y}\right)}\sim e^{n\phi\left(\ceil{\x_{y}}\right)}\label{eq:ceil-asymptotic}
\end{equation}
uniformly. Now, for each $\y\in\mathbb{R}^{2}$, we have $\ceil{\x_{\y}}\to\x^{*}$.
For $n$ large enough so that $\y\in B_{n}$, we have $\psi\left(\ceil{\x_{\y}}\right)\to\psi\left(\x^{*}\right)$
by continuity and $e^{n\phi\left(\x_{\y}\right)}\to e^{\frac{1}{2}\y^{T}H\y}$
by (\ref{eq:taylor}). We therefore have $\one_{B_{n}}\left(\y\right)\gg\left(\ceil{\x_{\y}}\right)\to\psi\left(\x^{*}\right)e^{\frac{1}{2}\y^{T}H\y}$
for all $\y$.

Recalling that $\C_{2}$ and $2\C_{1}$ are bounds involving $\psi$
and $\phi$ respectively, with (\ref{eq:ceil-asymptotic}) we have
$\left|\one_{B_{n}}\left(\y\right)\gg\left(\ceil{\x_{\y}}\right)\right|\le2\C_{2}e^{-\C_{1}\left|\y\right|^{2}}$
for sufficiently large $n$. 
Since
\[
\left(\det\left(-H\right)\right)^{-1/2}  = \frac{1}{3}\sqrt{\frac{2}{7}},\quad
\psi\left(\x^{*}\right)  =  108\sqrt{2}
\]
we obtain, by the dominated convergence theorem,
\[
\Hk 1\to\psi\left(\x^{*}\right)\int_{\mathbb{R}^{2}}\, e^{\frac{1}{2}\y^{T}H\y}\, 
  \mathrm{d}\y=2\pi\, \psi\left(\x^{*}\right)\, 
     \left(\det\left(-H\right)\right)^{-1/2} = \frac{144\pi}{\sqrt{7}}.
\]
Combining this with (\ref{F2bound}--\ref{F4bound})
completes the proof.
\end{proof}

We now pull these calculations together to prove the following.

\begin{lemma}
Let $\k=3$. Then
\[ \E[Y^2] \sim \frac{18}{\sqrt{14}}\, \left(\frac{16}{3}\right)^n,\]
and hence
\[ \frac{\E[Y^2]}{[\E Y]^2} \rightarrow \frac{9}{\sqrt{14}}.
\]
It follows that Condition~\ref{cond:A4} holds when $\k=3$.
\label{lem:secondmoment}
\end{lemma}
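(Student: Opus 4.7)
The approach is purely computational: Lemmas \ref{lem:|P|EY2} and \ref{lem:dominates} together already contain all the essential analytic content, namely the exact formula for $|\P|\,\E\Y^{2}$ and the saddle-point evaluation of the generating-function ingredient $F_n$, so all that remains is to assemble the pieces and apply Stirling's formula.

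First I would specialize formula (\ref{eq:E2}) to $\k=3$, in which case $(\k-2)^{n(\k-2)}=1$ and the prefactor collapses to
\[
|\P|\,\E\Y^{2} = \frac{2^{3n/2-2}\,(n!)^{2}\,3^{n}}{2\pi}\,F_{n}.
\]
Next, invoke Lemma \ref{lem:dominates}, which gives
$F_n \sim c_n\,e^{n\phi(\x^{*})}\cdot \frac{144\pi}{\sqrt{7}}$,
together with the already-computed values $c_n = e^{n/2}/(2\pi n^{n/2+3})$ and $e^{\phi(\x^{*})} = 4\sqrt{2/3}$. Substituting these in, applying Stirling in the form $(n!)^{2}\sim 2\pi n\,(n/e)^{2n}$, and collecting powers of $n$, $e$, $2$ and $3$ gives an explicit asymptotic for $|\P|\,\E\Y^{2}$.

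Finally I would divide by $|\P|\sim\sqrt{2}(3n/e)^{3n/2}$ from (\ref{eq:num-pairings-asymptotic}) to extract $\E\Y^{2}$, and divide by $[\E\Y]^{2}\sim (2/n^{2})(16/3)^{n}$ obtained by squaring (\ref{eq:EY-asymptotic}) at $\k=3$, to obtain the claimed ratio $\E\Y^{2}/[\E\Y]^{2}\to 9/\sqrt{14}$. Condition \ref{cond:A4} then follows immediately, since Lemma \ref{EY2/EY2-target} independently identifies $9/\sqrt{14}$ as the value of $\exp\left(\sum_{\i=1}^{\infty}\lambda_{\i}\wea_{\i}^{2}\right)$ when $\k=3$.

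The only obstacle is bookkeeping: there are many competing factors (powers $n^{n/2+3}$, $n^{2n}$, $n^{3n/2}$, together with powers of $2$, $3$ and $e$ coming from three different sources — the prefactor of $F_n$, the formula for $c_n$, and the Stirling expansions of both $(n!)^{2}$ and $|\P|$) that must be combined carefully before the claimed cancellation down to a multiple of $(16/3)^{n}$ appears. That this cancellation must work out as stated serves as a reassuring internal consistency check, since Lemma \ref{EY2/EY2-target} already predicts the limiting ratio $9/\sqrt{14}$ via a completely different route.
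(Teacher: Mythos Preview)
Your proposal is correct and follows essentially the same route as the paper: specialize (\ref{eq:E2}) to $\k=3$, combine (\ref{split}) with Lemma~\ref{lem:dominates} to obtain the asymptotics of $F_n$, apply Stirling and (\ref{eq:num-pairings-asymptotic}) to extract $\E Y^{2}$, then compare with (\ref{eq:EY-asymptotic}) squared and invoke Lemma~\ref{EY2/EY2-target} to verify Condition~\ref{cond:A4}. The paper's proof is just a more condensed write-up of the same computation.
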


\begin{proof}
Lemma~\ref{lem:dominates} and (\ref{split}) prove that
\[ \H \sim \frac{72}{n^3\sqrt{7}}\, \left( 4\sqrt{\frac{2e}{3n}}\right)^n.\]
Substituting $\k=3$ into (\ref{eq:E2}) and applying
(\ref{eq:num-pairings-asymptotic}) gives 
\[
\E\Y^{2} = \frac{(6\sqrt{2})^n\, (n!)^2}{4\pi\, \nP(3n)}\, \H
      \sim\frac{18}{\sqrt{14}}\left(\frac{16}{3}\right)^{n},
\]
using Stirling's approximation. 
Then, with (\ref{eq:EY-asymptotic}) and Lemma~\ref{EY2/EY2-target},
we conclude that
\[
\frac{\E\Y^{2}}{(\E\Y)^{2}}\to\frac{9}{\sqrt{14}}=\exp\left(\sum_{\i=1}^{\infty}\lambda_{\i}\gamma_{\i}^{2}\right).
\]
This establishes Condition~\ref{cond:A4}, as required. 
\end{proof}

We can now complete the proof of Theorem~\ref{thm:distribution-3}.

\begin{proof}[Proof of Theorem \ref{thm:distribution-3}]
We will prove that for general $\k \geq 3$, if Condition~\ref{cond:A4}
holds then Conjecture~\ref{cnj:distribution} is true.
In particular, this will prove Theorem~\ref{thm:distribution-3},
using Lemma~\ref{lem:secondmoment}.

Suppose that Condition \ref{cond:A4} is satisfied for some fixed
integer $\k\geq 3$. Then by Lemma~\ref{EY2/EY2-target} we may apply
Theorem~\ref{thm:janson} to conclude that
(\ref{eq:W}) holds for $Y$.  Therefore, for all real numbers $y$ we have
\begin{alignat*}{2}
\Pr\left(Y_{\mathcal{G}}/\E Y_{\mathcal{G}}<y\right) 
  &=\enspace&& \Pr\left(Y/\E Y_{\mathcal{G}}<y|X_{1}=X_{2}=0\right)\\
 & \rightarrow &&
  \Pr\left(W\exp\left(\lambda_{1}\wea_{1}+\lambda_{2}\wea_{2}\right)<y|
           Z_{1}=Z_{2}=0\right)\\
 &=&& \Pr\left(\prod_{\i=3}^{\infty}\left(1+\wea_{\i}\right)^{Z_{\i}}
        e^{-\lambda_{\i}\wea_{\i}}<y\right).
\end{alignat*}
Hence Conjecture \ref{cnj:distribution} is a consequence
of \ref{cond:A4}. 
\end{proof}

\subsection{Support for Conjecture~\ref{cnj:distribution}}\label{ss:numerical}
\global\long\def\a{p}

Let $\a_{\k}\left(n\right)$ denote the quotient 
\[
\frac{\E Y^{2}}{\left(\E Y\right)^{2}}\left/\exp\left(\sum_{\i=1}^{\infty}\lambda_{\i}\wea_{\i}^{2}\right)\right. .
\]
For any fixed integer $\k\geq 4$, Conjecture~\ref{cnj:distribution} holds 
if and only Condition~\ref{cond:A4} from Theorem~\ref{thm:janson} is satisfied;
that is, if and only if $\a_{\k}(n)\sim 1$.
Using Lemma~\ref{lem:|P|EY2} we can compute
$\a_{\k}\left(100\right)$ for various values of $\k$:
\begin{center}
\begin{tabular}{|c|c|c|c|c|c|}
\hline 
$\k$ & 3 & 4 & 5 & 6 & 100\tabularnewline
\hline 
$\a_{\k}\left(100\right)$ & 0.9761 & 0.9881 & 0.9921 & 0.9942 & 0.9998\tabularnewline
\hline 
\end{tabular}
\par\end{center}
Figure \ref{fig:EY2-plot} is a plot of
$\a_{\k}\left(n\right)$ for $\k\in \{ 3,4,5,6,100\}$ and $n\leq 50$.  

\begin{figure}[ht!]
\begin{center}
\centerline{\includegraphics{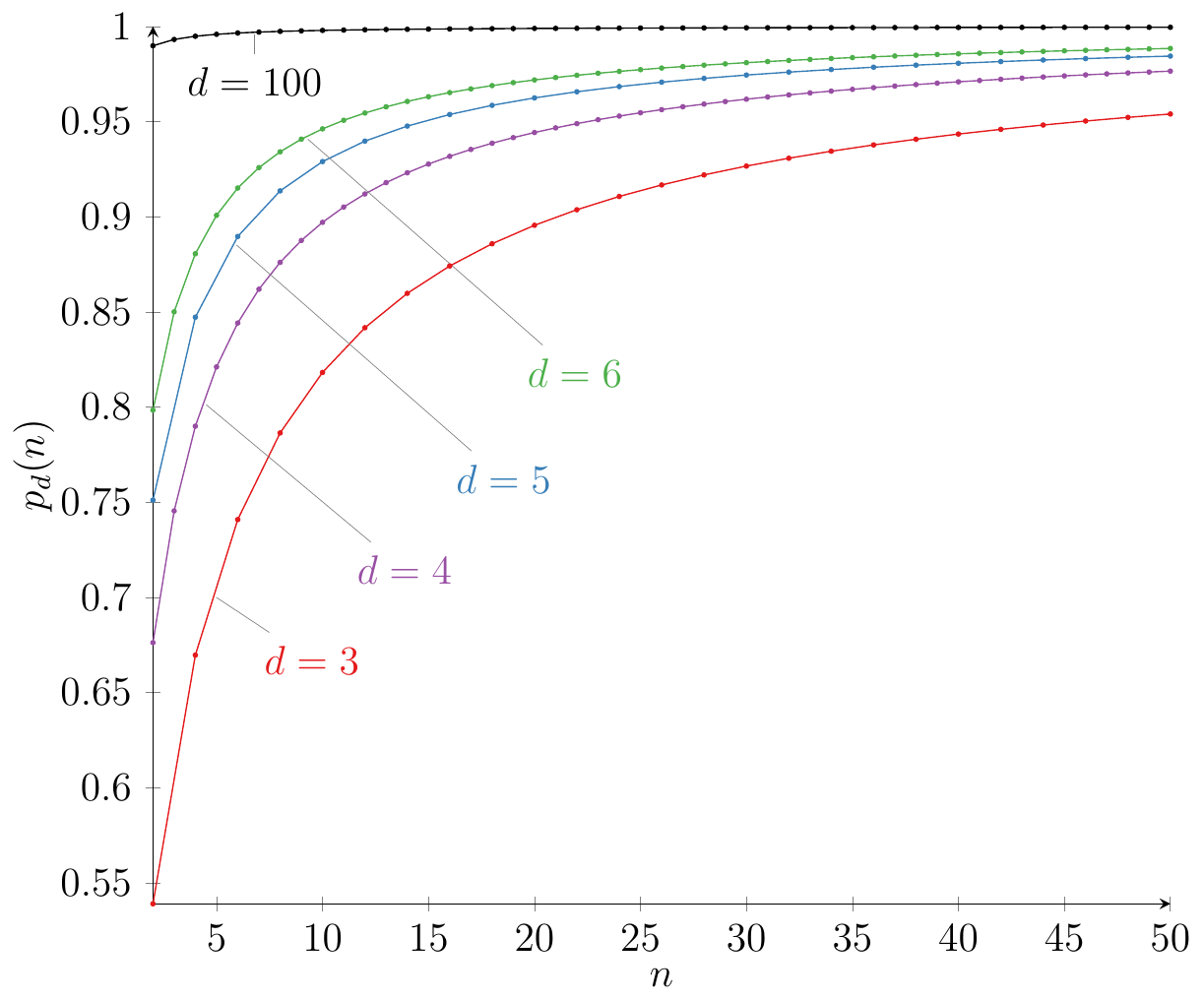}}
\caption{\label{fig:EY2-plot} A plot of $\a_\k(n)$ for $\k \in \{ 3,4,5,6,100\}$}
\end{center}
\end{figure}

This plot supports our conjecture that
$\a_{\k}(n)\sim 1$ for all $\k\geq 4$. 
Indeed, the rate of convergence to 1 appears to increase as $\k$ increases.

We now give an asymptotic result which is equivalent to 
Conjecture~\ref{cnj:distribution}.
Combining (\ref{eq:num-pairings-asymptotic}), \ref{cond:A4}, (\ref{eq:EY-asymptotic}), Lemma~\ref{EY2/EY2-target}, Lemma~\ref{lem:|P|EY2} and applying Stirling's 
formula shows that for a fixed integer $\k\geq 4$,
Conjecture~\ref{cnj:distribution} holds if and only if
\begin{align}
\sum_{b=1}^{n}\,&
  \frac{2^{b}}{b!\,\left(\left(\k/2-1\right)n-b+2\right)!}\,
  \left[z^{n}\right]\,\left(\sum_{\i=1}^{\infty}\binom{(\k-1)\i}{\i}\, z^{\i}\right)^{b} \notag\\
&\sim
  \frac{2\k^{2}}{\pi\left(\k-2\right)^{4} \, n^3}\,\,
  \sqrt{\frac{2\k-2}{\k^{2}-\k+1}}\,\,
   \left(\frac{\left(\k-1\right)^{2(\k-1)}}{\left(\k-2\right)^{2(\k-2)}}\left(\frac{2e}{\k n}\right)^{\k/2-1}\right)^{n}.
\label{sufficient}
\end{align}


\section*{Acknowledgements} 
We would like to thank the referee for their helpful comments.

\end{document}